\numberwithin{equation}{section}
\newcommand{\Dh}{\Delta_h}
\newcommand{\tc}{\tilde{c}}
\newcommand{\bD}{\boldsymbol{D}}
\newcommand{\bJ}{\boldsymbol{J}}
\newcommand{\reff}[1]{{\rm (\ref{#1})}}
\newcommand{\hf}{\frac{1}{2}}
\newcommand{\ciptwo}[2]{\left( #1 , #2 \right)}
\newcommand\dt {{\Delta t}}
\begin{document}

\title{A structure-preserving implicit exponential time differencing scheme for Maxwell-Amp\`ere Nernst-Planck model%\thanks{Grants or other notes
%about the article that should go on the front page should be
%placed here. General acknowledgments should be placed at the end of the article.}
}
%\subtitle{Do you have a subtitle?\\ If so, write it here}

%\titlerunning{Short form of title}        % if too long for running head

\author{Yunzhuo Guo \and Qian Yin \and Zhengru Zhang
}

%\authorrunning{Short form of author list} % if too long for running head

\institute{Yunzhuo Guo \at
              School of Mathematical Sciences, Beijing Normal University, Beijing 100875, P.R. China \\
              \email{yunzguo@mail.bnu.edu.cn}        
           \and
           Qian Yin \at
              {Department of Applied Mathematics, The Hong Kong Polytechnic University, Hung Hom, Hong Kong} \\
               \email{sjtu\_yinq@sjtu.edu.cn}        
           \and
           Zhengru Zhang (Corresponding Author) \at
           Laboratory of Mathematics and Complex Systems, Ministry of Education and School of Mathematical Sciences, Beijing Normal University, Beijing 100875, P.R. China \\
            \email{zrzhang@bnu.edu.cn}        
           \and
}

\date{Received: date / Accepted: date}
% The correct dates will be entered by the editor

\maketitle

\begin{abstract}
The transport of charged particles, which can be described by the Maxwell-Amp\`ere Nernst-Planck (MANP) framework, is essential in various applications including ion channels and semiconductors. We propose a decoupled structure-preserving numerical scheme for the MANP model in this work. The Nernst-Planck equations are treated by the implicit exponential time differencing method associated with the Slotboom transform to preserve the positivity of the concentrations. 
In order to be effective with the Fast Fourier Transform, additional diffusive terms are introduced into Nernst-Planck equations. Meanwhile, the correction is introduced in the Maxwell-Amp\`ere equation to fulfill Gauss's law.
The curl-free condition for electric displacement is realized by a local curl-free relaxation algorithm whose complexity is $O(N)$. 
We present sufficient restrictions on the time and spatial steps to satisfy the positivity and energy dissipation law at a discrete level. 
Numerical experiments are conducted to validate the expected numerical accuracy and demonstrate the structure-preserving properties of the proposed method.
%The transport of charged particles is essential in various applications including ion channels and semiconductors. We consider the new Maxwell-Amp\`ere Nernst-Planck model that involves the concentration and electric displacement to govern the dynamic of charges. A decoupled numerical scheme for the model is proposed in this work. The Nernst-Planck equations are treated by the implicit exponential time differencing method where the implicit discretization associated with Slotboom transform is aimed to preserve the positivity of the concentrations under a certain condition. The electric displacement from the Maxwell-Amp\`ere equation is updated by a local curl-free relaxation algorithm that has linear computational complexity and is expected to be more efficient than solving Poisson's equation with variable coefficient. The mass conservation is unconditionally satisfied at discrete level and the energy dissipation requires a restriction on the time and space steps. Numerical experiments are presented to validate the theoretical analysis and demonstrate the stability of the proposed method.

\bigskip

%{\bf AMS subject classification}:\,\, 35K35, 35K55, 49J40, 65M06, 65M12	
\keywords{Maxwell-Amp\`ere Nernst-Planck model \and Local curl-free algorithm \and Positivity-preservation \and Energy dissipation \and Picard iteration}
% \PACS{PACS code1 \and PACS code2 \and more}
\subclass{65M06 \and 65M12 \and 65M22 \and 35K20}
\end{abstract}

\section{Introduction}
The classical Poisson-Nernst-Planck (PNP) equations have many crucial applications in areas of semiconductors~\cite{2012semiconductor}, electrochemical systems~\cite{BTA:PRE:04}, ion channels~\cite{E:CP:98} and so on because of its success in describing charge transport. Based on the mean-field approximation, the PNP equations consist of Nernst-Planck equations for the diffusion and convection of the ionic concentrations and Poisson's equation for the electric potential. In this work, we consider a variant of the original PNP equations, named the Maxwell-Amp\`ere Nernst-Planck (MANP) model \cite{Qiao_MANP_model_2023}, which can be equivalently derived from the PNP equations by replacing the potential with electric displacement. The new model is governed by the following equations~\cite{Qiao_MANP_model_2023}:
%Poisson-Nernst-Planck (PNP) equations are often used to describe the ionic transport, such as ion channel, semiconductors and electrochemical systems. The former relates the diffusion and drift of different types of particles driven by the gradient of electric potential and gradient of ion concentration. The latter combines the charge density with the electric potential. In this work, we consider a variant of the original PNP equation named the Maxwell-Amp\`ere Nernst-Planck (MANP) model \cite{Qiao_MANP_model_2023} with periodic boundary condition. It can be equivalently derived from the PNP equation by replacing the electric potential with the electric displacement. The formula is proposed below:
\begin{align}
&\frac{\partial c^{\ell}}{\partial t}=\nabla \cdot \kappa\left(\nabla c^{\ell}-\frac{q^{\ell} c^{\ell} \boldsymbol{D}}{\varepsilon}+c^{\ell} \nabla \mu^{\ell, \mathrm{cr}}\right), \ell=1, \cdots, M, \label{MANP_1}   \\
&\frac{\partial \boldsymbol{D}}{\partial t}=\sum_{\ell=1}^M \frac{q^{\ell}}{2 \kappa}\left(\nabla c^{\ell}-\frac{q^{\ell} c^{\ell} \boldsymbol{D}}{\varepsilon}+c^{\ell} \nabla \mu^{\ell, \mathrm{cr}}\right)+\boldsymbol{\Theta}, \label{MANP_2}\\
&\nabla \cdot \boldsymbol{\Theta}=0, \label{MANP_3} \\
&\nabla \times \frac{\boldsymbol{D}}{\varepsilon}=\mathbf{0},\label{MANP_4}
\end{align}
where $c^{\ell}$ denotes the concentration of charged particles of the $\ell$-th species, $q^\ell$ is the associated ionic valence, $\bD$ is the electric displacement, $\varepsilon$ is the relative dielectric coefficient and $\kappa$ is a dimensionless parameter describing the Debye length. The free energy is given by
\begin{equation}
\mathcal{F}\left[c^1, \ldots, c^M\right]=\int_{\Omega}\left(\kappa^2 \frac{|\bD|^2}{\varepsilon}+\sum_{\ell=1}^M c^{\ell} \log c^{\ell} \right) dx +F^{ \text { cr }} \text {, with } \nabla \cdot 2 \kappa^2 \bD=\rho, \label{original energy}
\end{equation}
where $\rho:=\sum_{\ell=1}^M q^\ell c^\ell + \rho^f$ is the total charge density containing the space-dependent fixed charge $\rho^f$. $F^{ \mathrm{cr}}$ is the excess energy beyond the mean-field approximation. The excess chemical potential can be computed by the variation $\mu^{\ell, \mathrm{cr}}=\delta F^{\text{cr}}/\delta c^\ell$ and here we only consider the steric effects and Born solvation interaction \cite{Wang:PRE:2010,Liu2017PRE,KBA:PRE:2007,Qiao_MANP_model_2023}:
\begin{equation}
\mu^{\ell, \mathrm{cr}}=-\frac{v^{\ell}}{v^0} \log \left(v^0 c^0\right)+\frac{\chi\left(q^{\ell}\right)^2}{a^{\ell}}\left(\frac{1}{\varepsilon}-1\right). \label{excess potential}
\end{equation}
In \eqref{excess potential}, $c^0$ can be regarded as the concentration of solvent molecule, which is defined as 
\begin{equation}
c^0:=\left(1-\sum_{\ell=1}^M v^{\ell} c^{\ell}\right) / v^0, \label{solvent concentration}
\end{equation}
with $v^{\ell}$ and $v^0$ being the volumes of $\ell$-th ion and solvent molecule, respectively and $a^\ell$ being the Born radius for ions of the $\ell$-th species. 

The MANP model originated from the observation that the electric potential in the Nernst-Planck equations solely appears as its gradient, driving the convection of ionic concentrations. By substituting potential with the electric field $\boldsymbol{E}=-\nabla \phi$ or displacement $\bD=\varepsilon \boldsymbol{E}$, the Maxwell-Amp\`ere equation can be derived. Compared to the global potential, the electric displacement used in the new equation is local and retarded  diffusely~\cite{MR:PRL:2002,M:JCP:2002}. It can be computed locally by the Maxwell-Amp\`ere equation with the curl-free relaxation. Moreover, the MANP equations also maintain corresponding physical structures, such as positivity, mass conservation and energy dissipation law. 

There are many studies on developing structure-preserving numerical methods for the PNP and PNP-type equations \cite{Liu_PNP_2014,DGPNP_2016,Wang2016A,Modified_2021,dynamic_2022,Ding_PRE_2020,Zhou_2011,Ding_2013,Ding_2024,Ding_2019_optimal}.
On the one hand, the linear scheme generally has high efficiency but requires additional conditions or modifications to maintain specific physical properties. For example, He et al.~\cite{He_PNP_2019} raised a multi-step first-order linear scheme that unconditionally satisfies positivity-preservation. The longer stencil in time is necessary for the energy dissipation in the sense of a modified energy functional. 
The Slotboom transform is also a widely used method for designing linear numerical schemes \cite{Ding_2013a,Ding_2019,Efficient_2021}. The positivity is preserved based on the properties of an M-matrix. Meanwhile, a condition for the time step is generally needed for energy stability.
On the other hand, the nonlinear scheme maintains the physical structure unconditionally at a very high cost of calculating complexity. Ding et al. \cite{Ding_2020} implicitly treated the potential in the Slotboom transform and obtained the unconditional energy dissipation. Since the energy functional is convex, the fully implicit scheme based on discretization of gradient flow formulation is unconditionally energy stable \cite{LiuWang_PNP_2021,Qian_2021,Qian_2023}. An iteration solver for a nonlinear scheme of the classic PNP equations is proposed in \cite{Liu_Iteration_2022} and the convergence was also proved.
Besides, for the new MANP model, Qiao et al.~\cite{Qiao_NumericalMANP_2023} designed a positivity-preserving scheme using a similar idea to the Slotboom transform. Chang et al.~\cite{Chang_deep_learning} proposed a hybrid method combining the tools of deep learning and traditional approach. The positivity of concentration is also proved.

Recently, the exponential time differencing (ETD) method has been widely used to construct numerically stable schemes with physical structures. Due to the accurate integration of linear terms rather than numerical approximation, the ETD method has excellent accuracy and stability. Many studies have systematically developed the ETD methods \cite{Beylkin_ETD_1998,Cox_ETD_2002,Hochbruck_ETDRK_2006,Exponential_2010}, including several nice reviews of high-order methods based on multi-step approach and Runge-Kutta framework, as well as convergence analysis. 
In practical applications, the dense stiffness matrix generated by the exponential function can be efficiently calculated using the Fast Fourier Transform (FFT). As a result, the ETD method has attracted significant attention in the field of gradient flow problems \cite{Stability_analysis_2004,contour_integration_2005,ETDRK_elastec_bending_2016,Ju_ETD_epitaxial_2017,Li_ConvergenceETD_2019,Yang_Energy_ETD_2022}, especially designing structure-preserving schemes. Zhu et al. introduced a high-order ETD method for a class of parabolic equations \cite{FastHighOrder_2016}. In their work, a two-step compact difference method was used to achieve fourth-order accuracy in space and the Runge-Kutta discretization was used to obtain high-order accuracy in time. Du et al. \cite{Du_MBP_nolocal_2019} gave the proof of the maximum bound principle of the ETD method for the non-local Allen-Cahn equation.

In this work, the ETD framework is established for the Nernst-Planck equations, associated with the local curl-free relaxation algorithm for the Maxwell-Amp\`ere equations. There are two issues. The first one is that the classic ETD framework is basically explicit. As for MANP equations, physical properties cannot be theoretically proved in explicit scheme although numerical calculation shows agreement. Another issue is the mobility. PNP and PNP-type equations belong to the class of gradient flow problems with variable mobility while the ETD only has advantages when dealing with problems with constant ones. This is because the variable mobility can be reflected in the matrix exponential function, making the FFT inapplicable. 
To overcome these issues, we incorporate additional diffusive terms to implement the ETD framework. By combining the implicit treatment of certain linear terms, we can theoretically demonstrate that the numerical method preserves positivity and energy stability under acceptable conditions. %\textcolor{red}{Since the strong stiffness, the linear implicit scheme is solved by the Picard iteration. The numerical simulation shows that both Picard iteration and curl-free relaxation algorithm reach convergence very quickly. It is much more efficient than the nonlinear ones.}

The rest of this paper is organized as follows. In Section \ref{section discretization in space}, we define the centered finite difference spatial discretization and corresponding inner products and norms for mesh functions. The detailed implicit ETD method and local curl-free relaxation are developed in Section \ref{section numerical method}. Corresponding numerical analysis on structure-preserving properties is given in Section \ref{section structure-preserving properties}. Simulations are carried out in Section \ref{section numerical test} to demonstrate the reliability of the proposed algorithm. Conclusions are made in Section \ref{section conclusion}.

\section{Spatial Discretization} \label{section discretization in space}
The standard centered finite difference spatial discretization is applied to \eqref{MANP_1}-\eqref{MANP_2}. We adopt the notations and results for discrete functions and operators used in \cite{chen_convCHHS_2015,wise_PFC_2009,shen_Second-order_2012,hu_PFC_2009}. With the computational domain~$\Omega=(0, L_{x}) \times (0, L_{y})$~, we present the following definitions and properties.

\subsection{Basic definitions}
Consider that the domain $\Omega$ is covered by a uniform grid with size $h=L_{x}/N_{x}=L_{y}/N_{y}$, where the integers $N_x$ and $N_y$ are grid numbers along each dimension. We introduce the following representations for discrete mesh and function spaces.
\begin{definition}
\begin{compactitem}
	\item[$(1)$] For any positive integer $N$, we denote:
	$$
	\mathcal{E}_{N}:=\{i \cdot h | i=0, \ldots, N \}, \quad \mathcal{C}_{N}:=\{(i-1 / 2) \cdot h | i=1, \ldots, N \},
	$$
	$$
	\mathcal{C}_{\bar{N}}:=\{(i-1 / 2) \cdot h | i=0, \ldots, N+1\}.
	$$
	
	The points belong to $\mathcal{C}_{\bar{N}} \setminus \mathcal{C}_{N} $ are the so-called ghost points.
	
	\item[$(2)$] Define the following function spaces.
	\begin{compactitem}
		%\begin{itemize}
		\item Scalar cell-centered function:
		$$\mathcal{C}_{\Omega}:=\{\phi: \mathcal{C}_{\bar{N}_{x}} \times \mathcal{C}_{\bar{N}_{y}} \rightarrow \mathbb{R}\},$$
		where $\ \phi_{i,j}:=\phi(\xi_i, \xi_j)$, $\xi_i:=(i-1/2) h$ and $\xi_j:=(j-1/2)h$.
		\item Scalar vertex-centered function:
		$$\mathcal{F}_{\Omega}:=\{\phi: \mathcal{E}_{\bar{N}_{x}} \times \mathcal{E}_{\bar{N}_{y}} \rightarrow \mathbb{R}\}.$$
		
		\item Scalar face-centered function:
		$$
		\mathcal{E}_{\Omega}^{x}:=\{f: \mathcal{E}_{N_{x}} \times \mathcal{C}_{N_{y}} \rightarrow \mathbb{R}\},\quad
		\mathcal{E}_{\Omega}^{y}:=\{f: \mathcal{C}_{N_{x}} \times \mathcal{E}_{N_{y}} \rightarrow \mathbb{R}\},
		$$
		where $f_{i+\hf,j}:= f(\xi_{i+\hf},\xi_j) \in \mathcal{E}_{\Omega}^{x}$ and $f_{i,j+\hf}:= f(\xi_{i},\xi_{j+\hf}) \in \mathcal{E}_{\Omega}^{y}$.
		\item Vector function:
		$$\mathcal{E}_{\Omega}:=\mathcal{E}_{\Omega}^{x} \times \mathcal{E}_{\Omega}^{y}=\left\{\boldsymbol{f}= [f^x, f^y]^T: f^x\in \mathcal{E}_{\Omega}^{x} \text{~and~}  f^y\in \mathcal{E}_{\Omega}^{y}\right\}.$$
	\end{compactitem}
\end{compactitem}
For convenience, we omit the upper index in the function $\boldsymbol{f} = [f^x, f^y]^T \in \mathcal{E}_{\Omega}$ and denote 
\begin{equation}
f_{i+\frac{1}{2},j}=f^x_{i+\frac{1}{2},j}, \quad f_{i,j+\frac{1}{2}}=f^y_{i,j+\frac{1}{2}}. \nonumber
\end{equation}
The discrete boundary condition, associated with cell-centered function, is proposed in Definition \ref{periodic boundary condition}.
%$
%\mathcal{C}_{\Omega}:=\{\phi: \mathcal{C}_{\bar{N}_{x}} \times \mathcal{C}_{\bar{N}_{y}} \rightarrow \mathbb{R}\}, \quad \mathcal{F}_{\Omega}:=\{\phi: \mathcal{E}_{\bar{N}_{x}} \times \mathcal{E}_{\bar{N}_{y}} \rightarrow \mathbb{R}\},
%$$
%$$
%\mathcal{E}_{\Omega}^{x}:=\{\phi: \mathcal{E}_{N_{x}} \times \mathcal{C}_{N_{y}} \rightarrow \mathbb{R}\},\quad
%\mathcal{E}_{\Omega}^{y}:=\{\phi: \mathcal{C}_{N_{x}} \times \mathcal{E}_{N_{y}} \rightarrow \mathbb{R}\}.
%$$
%$$\mathcal{E}_{\Omega}:=\mathcal{E}_{\Omega}^{x} \times \mathcal{E}_{\Omega}^{y}.$$
\end{definition}
%The function in $\mathcal{C}_{\Omega}$ is cell-centered function. In component form, cell-centered function is identified by~$\ \phi_{i,j}:=\phi(\xi_i, \xi_j)$, where~$\xi_i:=(i-1/2)\cdot h,~i=1,\cdots, N_x$~ and ~$\xi_j:=(j-1/2)\cdot h,~j=1,\cdots, N_y$. The function in $\mathcal{E}_{\Omega}^{x}$ (resp. $\mathcal{E}_{\Omega}^{y}$), is called face-centered function. In component form, face-centered function is defined by~$f_{i+\hf,j}:= f(\xi_{i+\hf},\xi_j)$~( resp. $f_{i,j+\hf}:= f(\xi_{i},\xi_{j+\hf}$). For convenience, we omit the upper index in the function $\boldsymbol{f} = [f^1, f^2]^T \in \mathcal{E}_{\Omega}$ and denote 
%\begin{equation}
%\boldsymbol{f}_{i+\frac{1}{2},j}=f^1_{i+\frac{1}{2},j}, \quad \boldsymbol{f}_{i,j+\frac{1}{2}}=f^2_{i,j+\frac{1}{2}}. \nonumber
%\end{equation}

\begin{definition} \label{periodic boundary condition}
 A discrete function $\phi \in \mathcal{C}_{\Omega}$ satisfies periodic boundary condition if
$$
\begin{array}{ll}
{\phi_{0,j}=\phi_{N_x,j},} & {\phi_{N_x+1,j}=\phi_{1,j},} \\
{\phi_{i,0}=\phi_{i,N_y},} & {\phi_{i,N_y+1}=\phi_{i,1}.} 
\end{array}
$$
\end{definition}
\subsection{Discrete operators, inner products, and norms}
Based on central difference, the discrete differential operators are defined as follow.
\begin{definition}
%$($Difference operators$)$
\begin{itemize}
	\item[$(1)$] $d_{x}: \mathcal{E}_{\Omega}^{x} \rightarrow \mathcal{C}_{\Omega}$ is defined component-wisely by
	$$d_{x} f_{i, j}:=\frac{1}{h}\left(f_{i+\frac{1}{2}, j}-f_{i-\frac{1}{2}, j}\right),$$
	and $d_{y}: \mathcal{E}_{\Omega}^{y} \rightarrow \mathcal{C}_{\Omega} $ is formulated analogously. Then we have discrete divergence
	$$\nabla_{h} \cdot: \mathcal{E}_{\Omega} \rightarrow \mathcal{C}_{\Omega}, \quad\quad \nabla_{h} \cdot \boldsymbol{f}:=d_{x} f^{x}+d_{y} f^{y},$$
	and discrete curl in 2-D
	$$\nabla_{h}\times: \mathcal{E}_{\Omega} \rightarrow \mathcal{F}_{\Omega},\quad\quad 
	\nabla_{h} \times \boldsymbol{f}_{i+\frac{1}{2},j+\frac{1}{2}}:=\frac{1}{h}({f}_{i+1,j+\frac{1}{2}}-{f}_{i,j+\frac{1}{2}})
	-\frac{1}{h}({f}_{i+\frac{1}{2},j+1}-{f}_{i+\frac{1}{2},j}),$$
	where $\boldsymbol{f}=(f^x, f^y)^T \in \mathcal{E}_{\Omega}$.
	\item[$(2)$] $D_{x}: \mathcal{C}_{\Omega} \rightarrow \mathcal{E}_{\Omega}^{x}$ is defined component-wisely by
	$$D_{x} \phi_{i+\frac{1}{2}, j}:=\frac{1}{h}\left(\phi_{i+1, j}-\phi_{i, j}\right),$$
	and $D_{y}: \mathcal{C}_{\Omega} \rightarrow \mathcal{E}_{\Omega}^{y}$ is formulated analogously. Then we have discrete gradient defined by
	$$\nabla_{h}: \mathcal{C}_{\Omega} \rightarrow \mathcal{E}_{\Omega},\quad\quad \nabla_{h} \phi:=\left(D_{x} \phi, D_{y} \phi \right)^{T}.$$
	\item[$(3)$] The standard discrete Laplace operator is defined by
	$$\Delta_{h}: \mathcal{C}_{\Omega} \rightarrow \mathcal{C}_{\Omega}, \quad\quad \Delta_{h} \phi:=\nabla_{h} \cdot \nabla_{h} \phi.$$
\end{itemize}
\end{definition}
In this paper, we regard the cell-centered function as a flattened column vector belongs to $R^{(N_x+2) \times (N_y+2)}$ or $R^{N_x \times N_y}$ if the function satisfies the periodic boundary condition. The discrete linear operators, like Laplacian, are naturally considered as a matrix belongs to $R^{[(N_x+2) \times (N_y+2)]^2}$ or $R^{(N_x \times N_y)^2}$. Now we define the discrete inner products and norms.
\begin{definition}
The inner products for cell-centered function space $\mathcal{C}_{\Omega}$ and face-centered function space $\mathcal{E}_{\Omega}^x$, $\mathcal{E}_{\Omega}^y$ are respectively defined by
$$(\phi, \psi):=h^2 \sum_{i=1}^{N_x} \sum_{j=1}^{N_y} \phi_{i,j} \psi_{i,j}, \quad\quad\quad\quad \forall \phi, \psi \in \mathcal{C}_{\Omega},$$
$$[f, g]_{x}:= \frac{1}{2} h^2 \sum_{i=1}^{N_x} \sum_{j=1}^{N_y}(f_{i+\frac{1}{2}, j} g_{i+\frac{1}{2}, j}+f_{i-\frac{1}{2}, j} g_{i-\frac{1}{2}, j}), \qquad \forall f,g \in \mathcal{E}_{\Omega}^x,$$
and 
$$[f, g]_{y}:= \frac{1}{2}h^2 \sum_{i=1}^{N_x} \sum_{j=1}^{N_y}(f_{i, j+\frac{1}{2}} g_{i, j+\frac{1}{2}}+f_{i, j-\frac{1}{2}} g_{i, j-\frac{1}{2}}), \qquad \forall f,g \in \mathcal{E}_{\Omega}^y.$$
The inner product for discrete vector function is defined by
\begin{equation}\label{vec_product}
	(\boldsymbol{f}, \boldsymbol{g}):=[f^{x}, g^{x}]_{x}+[f^{y}, g^{y}]_{y}, \qquad \forall  \boldsymbol{f}=(f^x,f^y)^T\in \mathcal{E}_{\Omega}, \quad \boldsymbol{g}=(g^x,g^y)^T\in \mathcal{E}_{\Omega}.
\end{equation}
For the cell-centered function, if $\phi$ and $\psi$ satisfy the periodic boundary condition, then the summation by parts formula holds
$$(\phi,\Delta_h\psi)=-(\nabla_h\phi,\nabla_h\psi).$$
This formula will be utilized in following analysis.
\end{definition}

\begin{definition}
	For scalar cell-centered function $\phi\in\mathcal{C}_{\Omega}$, the $p$-norms are defined by
	$$\|\phi\|_{p}:= \left\{
	\begin{aligned}
		&(|\phi|^p,1)^{\frac{1}{p}},\quad\quad 1 \leq p < \infty, \\
		&\max _{i, j}|\phi_{i, j}|,\quad\quad   p=\infty.
	\end{aligned} \right.$$
	%and the p-norms for vector function $\nabla_{h} \phi \in \mathcal{E}_{\Omega}$ are 
	%\begin{equation} \label{dnorm}
	%	\left\|\nabla_{h} \phi\right\|_{p}:=\left(\left[\left|D_{x} \phi\right|^{p}, 1\right]_{x}+\left[\left|D_{y} \phi\right|^{p}, 1\right]_{y}\right)^{\frac{1}{p}}.
	%\end{equation}

For vector function $\boldsymbol{f}=(f^x,f^y)^T\in\mathcal{E}_{\Omega}$, the p-norm are defined by
\begin{equation}\label{vecnorm}
\left\|\boldsymbol{f} \right\|_{p}:=\left\{
\begin{aligned} & \left(\left[\left|f^x \right|^{p}, 1\right]_{x}+\left[\left|f^y \right|^{p}, 1\right]_{y}\right)^{\frac{1}{p}}, \qquad 1 \leq p < \infty,\\
	&\max_{i,j} \left\{  |f^x_{i+\frac{1}{2}, j}|,|f^y_{i, j+\frac{1}{2}}| \right\},\qquad\  p=\infty.
\end{aligned} \right.\end{equation}
	
Note that the 2-norm for discrete vector function can also be defined by the above inner product~\eqref{vec_product} in the form of
$$\|\boldsymbol{f}\|_2=\sqrt{(\boldsymbol{f},\boldsymbol{f})}, \qquad \forall\boldsymbol{f}\in \mathcal{E}_{\Omega},$$
 which is consistent with the definition~\reff{vecnorm} when $p=2$.
%For $\phi\in\mathcal{C}_{\Omega}$, we have norms as follow:
%$$\|\phi\|_{\infty}:=\max _{i, j}|\phi_{i, j}|,$$
%$$\|\phi\|_p:=(|\phi|^p,1)^{\frac{1}{p}},\quad\quad 1 \leq p < \infty$$
%$$\left\|\nabla_{h} \phi\right\|_{p}:=\left(\left[\left|D_{x} \phi\right|^{p}, 1\right]_{x}+\left[\left|D_{y} \phi\right|^{p}, 1\right]_{y}\right)^{\frac{1}{p}}.$$
%Obviously, $(\nabla_{h} \phi,\nabla_{h} \phi)=\|\nabla_{h} \phi\|_2^2$ as $p=2$. 
For simplicity of presentation, we denote $\| \cdot \| = \| \cdot \|_2$ in the rest of the paper.
% for both cell-centered function and face-centered function.
\end{definition}

\section{Numerical method} \label{section numerical method}
\subsection{The implicit ETD method for Nernst-Planck equations}

By the Helmholtz decomposition theorem~\cite{Nedelec2001,Monk2003FiniteEM}, there exists a scalar function $\phi$ such that $\boldsymbol{D}/\varepsilon =-\nabla \phi$ with curl-free condition in \eqref{MANP_4}. Using $\phi$ and the Slotboom transformation~\cite{Liu_PNP_2014,Ding_2019}, the Nernst-Planck equations \eqref{MANP_1} can be rewritten as
\begin{equation}
\frac{\partial c^{\ell}}{\partial t}=\nabla \cdot \kappa\left[ e^{-g^\ell}\nabla (e^{g^\ell}c^\ell) \right], \label{Slotboom}
\end{equation}
where $g^\ell=q^\ell \phi + \mu^{\ell,cr}$. The scalar function $\phi$ is only mentioned in the derivation and does not need to be calculated.

Denote by $c^{\ell}_h (t) \in \mathcal{C}_{\Omega},~\ell = 0, 1, \cdots, M,$ with $c^{\ell}_h =(c^{\ell}_{i,j}) $ the spatial discretization of concentrations $c^{\ell}(x,t)$ with given $t$. Denote by $\bD_h (t)=(D^x_h,D^y_h)^T \in \mathcal{E}_{\Omega}$ with $D^x_h=(D_{i+1/2,j})$ and $D^y_h=(D_{i,j+1/2})$ the spatial discretizations of electric displacements $\bD(x,t)$ with given $t$. Central spatial difference for~\reff{Slotboom} leads to
\begin{equation} \label{spacediff}
\frac{\partial c^{\ell}_h(t)}{\partial t}
 = -\nabla_h \cdot \bJ^{\ell}(g^\ell(t),c^\ell_h(t)) \end{equation}
where the numerical flux is given by
\begin{align}
	&J^{\ell}_{i+\frac{1}{2},j}(g^\ell(t),c^\ell_h(t))=-\kappa e^{-g^{\ell}_{i+\frac{1}{2},j}(t)} D_x \left(e^{-g^\ell_h(t)}c^\ell_h(t) \right)_{i+\frac{1}{2},j}, \nonumber \\
	&J^{\ell}_{i,j+\frac{1}{2}}(g^\ell(t),c^\ell_h(t))=-\kappa e^{-g^{\ell}_{i,j+\frac{1}{2}}(t)} D_y \left(e^{-g^\ell_h(t)}c^\ell_h(t) \right)_{i,j+\frac{1}{2}}. \nonumber
\end{align}
At the half-grid points, we utilize entropic mean for interpolation
\begin{equation}
	e^{-g^{\ell}_{i+\frac{1}{2},j}(t)}=\frac{g^{\ell}_{i+1,j}(t)-g^{\ell}_{i,j}(t)}{e^{g^{\ell}_{i+1,j}(t)}-e^{g^{\ell}_{i,j}(t)}}\ \text{~and~} 
	e^{-g^{\ell}_{i,j+\frac{1}{2}}(t)}=\frac{g^{\ell}_{i,j+1}(t)-g^{\ell}_{i,j}(t)}{e^{g^{\ell}_{i,j+1}(t)}-e^{g^{\ell}_{i,j}(t)}}. \label{entropic mean}
\end{equation}
Calculation shows that the flux can be determined by
\begin{equation}\label{flux}
	J^{\ell}_{i+\frac{1}{2},j}(g^\ell(t),c^\ell(t))=-\frac{\kappa}{h}\left[ B\left( -dg^{\ell}_{i+\frac{1}{2},j}(t) \right)c^{\ell}_{i+1,j}(t)-B\left( dg^{\ell}_{i+\frac{1}{2},j}(t)\right)c^{\ell}_{i,j}(t)   \right],
\end{equation}
where $B(\cdot)$ is the Bernoulli function defined by
\begin{equation}\label{Function B}
	B(z)= \begin{cases}
	\frac{z}{e^z-1} 
               \text{~for~} z \neq 0,\\
               1, \text{~~~~~for~} z = 0,
	\end{cases}%\label{Function B}
\end{equation}
and 
\begin{align}
	dg^{\ell}_{i+\frac{1}{2},j}(t)&=g^{\ell}_{i+1,j}(t)-g^{\ell}_{i,j}(t)=q^\ell(\phi_{i+1,j}(t)-\phi_{i,j}(t))+\mu^{\ell, cr}_{i+1,j}(t)-\mu^{\ell, cr}_{i,j}(t) \nonumber \\
	& =-hq^\ell \frac{D_{i+1/2,j}(t)}{\varepsilon_{i+1/2,j}}+\mu^{\ell, cr}_{i+1,j}(t)-\mu^{\ell, cr}_{i,j}(t). \label{entropic mean 1}
\end{align}
The last equality is based on the scale relation $ \bD_h(t)=-\varepsilon \nabla_h \phi(t) $. $J^{\ell}_{i,j+1/2}(dg^\ell(t),c^\ell(t))$ can be calculated in the same way.

Alternatively, there are other interpolations, like arithmetic, geometric and harmonic mean, used to derive different numerical schemes with different functions $B(\cdot)$ \cite{Ding_2013a}. The use of harmonic mean can result in a well-conditioned system~\cite{Ding_2019}. However, the entropic mean is preferred in convection-dominated problems, resulting in the Scharfetter-Gummel scheme~\cite{Scharfetter1969IEEE}, for the reason that it can reduce to upwind scheme in large convection.

With flux~\reff{flux}, the right hand of \eqref{spacediff} can be regarded as a linear operator of $c^\ell_h(t)$. However, compared with the Laplacian, it is not symmetric. The FFT is invalid to calculate the exponential matrix function. Equivalently, we incorporate additional diffusive terms and reformulate \eqref{spacediff} as
\begin{equation}\label{s2}
	\frac{\partial c^{\ell}_h(t)}{\partial t}-\kappa \Delta_h c^\ell_h(t) + \lambda c^\ell_h(t)
	= -\nabla_h \cdot \bJ^{\ell}(dg^\ell(t),c^\ell(t)) -\kappa \Delta c^\ell_h(t) + \lambda c^\ell_h(t),
	%\frac{\partial c^{\ell}}{\partial t}-\kappa \Delta c^\ell + \lambda c^\ell =-\kappa \Delta c^\ell + \lambda c^\ell + \nabla \cdot \kappa\left[ e^{-g^\ell}\nabla (e^{g^\ell}c^\ell)  \right],
\end{equation}
where $\lambda$ is a stabilizer. 

Let $\dt$ be the time step. Denote by $c^{\ell,n}_h\in \mathcal{C}_{\Omega}$ and $\bD_h^n\in \mathcal{E}_{\Omega}$ the approximation of concentrations $c^{\ell}_h(t)$ and electric displacements $\bD_h(t)$ at time $t_n:=n\dt$. We can omit the subscript $h$ in $c^{\ell,n}_h$ and $\bD_h^n$ without causing any conflict.
Based on~\reff{s2}, the implicit first-order ETD (ETD1) scheme is designed as: for $n>0$ and given $(c^{\ell,n},\bD^n), ~\ell=1,\cdots,M$, find $c^{\ell,n+1}=\tc^{\ell}(t_{n+1})$ with periodic boundary condition such that 
\begin{align}
\frac{\partial \tc^{\ell}}{\partial t}+L_h \tc^\ell 
%&= \lambda c^{\ell,n} + \nabla_h \cdot \kappa\left[ e^{-g^{\ell,n}}\nabla_h (e^{g^{\ell,n}}c^{\ell,n+1}) \right] -\kappa \Delta_h c^{\ell,n} \nonumber \\
& = L_h c^{\ell,n}+M_h^n c^{\ell,n+1}, \label{Slotboom_2} \\ 
 \tc^{\ell}(t_n)=& c^{\ell,n}, \label{Slotboom_2_1}
\end{align}
where the linear operators $L_h$ and $M_h^n$ is defined in space $\mathcal{C}_{\Omega}$ by %Furthermore, given a mesh function $c$ with periodic boudary condition, we define the linear operator $M_h^n$ as 
\begin{equation}
	L_h f:=\left( -\kappa\Dh+\lambda I \right) f, 
	\text{~~and~~} M_h^n f := -\nabla_h \cdot \bJ(dg^\ell(t_n),f), \quad \forall f\in \mathcal{C}_{\Omega}. \label{Matrix Mn}
\end{equation}

%\begin{equation}
%	M_h^n f := \nabla_h \cdot \kappa\left[ e^{-g^{\ell,n}}\nabla_h (e^{g^{\ell,n}}f) \right], \quad \forall f\in \mathcal{C}_{\Omega}. \label{Matrix Mn}
%\end{equation}

By introducing the integration factor, we have
\begin{equation}
\partial_t(e^{L_h t}\tc^{\ell})=e^{L_h t} L_h c^{\ell,n} + e^{L_h t} M_h^n c^{\ell,n+1}.\label{integration factor}
\end{equation}
Integrating \eqref{integration factor} from $t_n$ to $t_{n}+s$ where $0 \leq s \leq \dt$, we have the solution of \eqref{Slotboom_2}-\eqref{Slotboom_2_1} satisfying
\begin{align}
\tc^{\ell}(t_n+s) &=e^{-L_h s} c^{\ell,n} + \left(\int_0^s e^{-L_h t} dt \right)(L_h c^{\ell,n}+M^n_h c^{\ell,n+1})  \nonumber \\
& = e^{-L_h s} c^{\ell,n} +L_h^{-1}(I-e^{-L_h s})(L_h c^{\ell,n}+M^n_h c^{\ell,n+1})  \nonumber \\
& = c^{\ell,n}+L_h^{-1}(I-e^{-L_h s})M^n_h c^{\ell,n+1}. \nonumber
\end{align}
%and the numerical scheme is written as
Accordingly, $c^{\ell,n+1}$ can be computed by 
\begin{equation}
	c^{\ell,n+1}
	 =c^{\ell,n}+\dt f_e(\dt L_h)M^n_h c^{\ell,n+1},\label{ETD1}
\end{equation}
where the function $f_e$ is defined as
\begin{equation}
f_e(x):=
\begin{cases}
& \frac{1-e^{-x}}{x}, \quad x \neq 0,  \\
& 1,   ~~~~~~~~\quad x=0.
\end{cases}
\label{matrix function}
\end{equation}
The matrix exponential function can be calculated efficiently by the FFT and the theory of matrix function~\reff{matrix function} is based on the following Lemma \ref{Matrix function lemma}. 
\begin{lemma}\cite{FunctionsMatrices} \label{Matrix function lemma}
Assume $\phi$ is defined on the spectrum of matrix $A \in \mathbb{C}^{m \times m}$, that is, the values
\begin{equation}
\phi^{(j)}(\lambda_i), \quad 0 \leq j \leq n_i-1, \ 1 \leq i \leq m,  \nonumber
\end{equation}
exist, where $\{ \lambda_i \}_{i=1}^m$ are the eigenvalues of $A$ and $n_i$ is the order of the largest Jordan block where $\lambda_i$ appears. Then
\begin{itemize}
\item[$(1)$] $\phi(A)$ commutes with $A$;
\item[$(2)$] $\phi(A^T)=\phi(A)^T$;
\item[$(3)$] the eigenvalues of $\phi(A)$ are $\{ \phi(\lambda_i): 1 \leq i \leq m \}$;
\item[$(4)$] $\phi(P^{-1}AP)=P^{-1}\phi(A)P$ for any nonsingular matrix $P \in \mathbb{C}^{m \times m}$;
\item[$(5)$] $\frac{d}{ds}(e^{As})=Ae^{As}=e^{As}A$ for any $s \in R$.
\end{itemize}
\end{lemma}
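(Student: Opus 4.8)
The plan is to base the entire argument on the \emph{interpolating polynomial} characterization of $\phi(A)$: there is a unique polynomial $p$, of degree less than the degree of the minimal polynomial of $A$, fixed by the Hermite conditions $p^{(j)}(\lambda_i) = \phi^{(j)}(\lambda_i)$ for $0 \le j \le n_i - 1$ at each distinct eigenvalue $\lambda_i$, and one \emph{defines} $\phi(A) := p(A)$. The payoff of adopting this definition is that four of the five assertions collapse to elementary algebraic facts about a polynomial evaluated at a matrix, since $\phi(A)$ is literally a polynomial in $A$.

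With this in hand, property (1) is immediate: any $p(A) = \sum_k a_k A^k$ commutes with $A$ because each power $A^k$ does. Property (4) follows once I note that $P^{-1}AP$ has exactly the same eigenvalues and the same Jordan block sizes as $A$, so the \emph{same} interpolating polynomial $p$ represents $\phi$ on its spectrum; then $\phi(P^{-1}AP) = p(P^{-1}AP) = P^{-1}p(A)P = P^{-1}\phi(A)P$, using $(P^{-1}AP)^k = P^{-1}A^kP$ termwise. Property (2) uses the same mechanism for the transpose: $A^T$ is always similar to $A$ and hence shares its spectrum and Jordan structure, so the same $p$ applies and $\phi(A^T) = p(A^T) = p(A)^T = \phi(A)^T$ via $(A^k)^T = (A^T)^k$.

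For property (3) I would pass to the Jordan canonical form $A = ZJZ^{-1}$, so that $\phi(A) = Z\,p(J)\,Z^{-1}$ is similar to $p(J)$; since $p(J)$ is block upper-triangular with each diagonal Jordan block mapped to an upper-triangular block carrying $p(\lambda_i) = \phi(\lambda_i)$ on its diagonal, the eigenvalues of $\phi(A)$ are precisely $\{\phi(\lambda_i)\}$ (the spectral mapping theorem for the matrix function). Property (5) is the one assertion that is not purely algebraic, and I would treat it separately from the power series $e^{As} = \sum_{k \ge 0} A^k s^k / k!$: differentiating termwise, justified by local uniform convergence of the series and its derivative on compact $s$-intervals, gives $\tfrac{d}{ds} e^{As} = \sum_{k \ge 1} A^k s^{k-1}/(k-1)!$, and factoring out $A$ on either side — legitimate precisely because $A$ commutes with every $A^{k-1}$, i.e. by property (1) — yields both $A e^{As}$ and $e^{As} A$.

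The main obstacle is the foundational step of verifying that the interpolating-polynomial definition is well posed and coincides with the Jordan-form definition on which properties (2) and (3) rest; in particular, property (2) quietly depends on the nontrivial fact that every square matrix is similar to its transpose, and property (3) requires the explicit triangular form of $\phi$ evaluated at a single Jordan block. Once these structural facts about the Jordan canonical form are granted, the remaining computations are routine, so I would flag them as the crux and relegate the rest to the algebra above.
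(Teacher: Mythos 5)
The paper offers no proof of this lemma at all: it is quoted verbatim from the cited reference (Higham's \emph{Functions of Matrices}), where the standard development is exactly the one you give --- define $\phi(A):=p(A)$ via the Hermite interpolating polynomial, check consistency with the Jordan-form definition, and read off (1)--(4) as polynomial identities, with the spectral mapping in (3) coming from the triangular form of $p(J)$ and (5) from termwise differentiation of the exponential series. Your argument is correct, and your flagged subtleties (similarity of $A$ to $A^T$ for (2), well-posedness of the interpolation definition, and that $P^{-1}AP$ shares the Jordan structure of $A$ so the \emph{same} $p$ applies in (4)) are precisely the points the reference makes rigorous; there is nothing to add.
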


Note that the matrix of linear system \eqref{ETD1} has strong stiffness. The direct approaches, like Gaussian elimination, require huge memory and complexity. It is well-known that the Gaussian elimination method has the complexity of $O(n^3)$. As a result, the iterative method is more suitable here, and Picard iteration is utilized as an alternative.
%Traditional methods, like Gaussian elimination, are not suitable here. Therefore, the Picard iteration is utilized as an alternative. 
It requires the matrix $L_h^{-1}(I-e^{-L_h \dt})M^n_h$ to be a contraction in the sense of a certain norm.  Some conditions of time and spatial step could be raised for contractility, which will be analyzed in Section ~\ref{section structure-preserving properties}.

\begin{remark}
    The reason why we use the implicit ETD method is to maintain physical properties in theory. Although the explicit ETD method, defined as $$c^{\ell,n+1}=c^{\ell,n}+\dt f_e(\dt L_h)M^n_h c^{\ell,n},$$ cannot be theoretically proven to maintain structure-preserving, the numerical simulation shows excellent stability and efficiency based on the FFT. In fields that require efficiency, the explicit ETD method can be used as alternative. In this way, high-order ETD and Runge-Kutta schemes can be practicable. This flexibility is one of the advantages of the ETD idea.
\end{remark}

\subsection{The numerical scheme for Maxwell-Amp\`ere equation}
Now we turn to the Maxwell-Amp\`ere equation \eqref{MANP_2}.
The electric displacement $\boldsymbol{D}^{n+1}$ should satisfy the curl-free constraint and discrete Gauss's law which is defined as 
\begin{equation}
2\kappa^2 \nabla_h \cdot \boldsymbol{D}^{n+1} = \sum_{\ell=1}^M q^\ell c^{\ell,n+1} + \rho^f. \label{discrete Gauss's law}
\end{equation}
These two properties uniquely locate the desired $\boldsymbol{D}^{n+1}$. The target here is to obtain a Gauss-law satisfying approximation of $\boldsymbol{D}^{n+1}$, denoted by $\boldsymbol{D}^*$. Then $\boldsymbol{D}^*$ will be further corrected by the local curl-free algorithm, see Section~\ref{local}. Based on the idea of ETD, we determine $\bD^*=\tilde{\bD}(t_{n+1})$ by solving
\begin{align}
&2\kappa^2 \tilde{\bD}_t=\sum_{\ell=1}^M q^\ell\left(\kappa\nabla_h \tc^{\ell}-\bJ^{\ell,n}- \kappa\nabla_h c^{\ell,n} \right)+\tilde{\boldsymbol{F}}+\boldsymbol{\Theta}^n, \label{D1}  \\
& \tilde{\bD}(t_n)=\bD^n, \nonumber
\end{align}
where $\tilde{\boldsymbol{F}}(t)$ is an undetermined correction mesh function designed to ensure the Gauss's law and $\boldsymbol{\Theta}^n$ is a divergence-free vector. Calculation the discrete divergence of \eqref{D1} and integration from $t_n$ to $t_{n+1}$ lead to
%\begin{equation}
%2\kappa^2 \nabla_h \cdot \tilde{\bD}_t = \sum_{\ell=1}^M q^\ell \left( \frac{\partial \tc^{\ell}}{\partial t} + \lambda \tc^{\ell}-\lambda c^{\ell,n} \right) + \nabla_h \cdot \tilde{\boldsymbol{F}}.\nonumber
%\end{equation}
%The integral from $t_n$ to $t_{n+1}$ shows that
\begin{equation}\label{ma2}
2\kappa^2 \nabla_h \cdot (\bD^*-\bD^n)=\sum_{\ell=1}^M q^\ell \left( c^{\ell,n+1}-c^{\ell,n}\right) + \sum_{\ell=1}^M q^\ell \kappa \left(  \int_{t_n}^{t_{n+1}} \tc^{\ell}dt- \dt c^{\ell,n} \right) +  \nabla_h \cdot \int_{t_n}^{t_{n+1}} \tilde{\boldsymbol{F}} dt. 
\end{equation}
To maintain the Gauss's law, the second and third terms of the right hand of~\reff{ma2} should be vanished. Therefore, we denote $\boldsymbol{F}^n:=\int_{t_n}^{t_{n+1}} \tilde{\boldsymbol{F}} dt$ with periodic boundary condition determined by
\begin{align}
& - \Delta_h \psi^n = \sum_{\ell=1}^M q^\ell \kappa \left(  \int_{t_n}^{t^{n+1}} \tc^{\ell}dt- \dt c^{\ell,n} \right),\nonumber \\
& \boldsymbol{F}^n=\nabla_h \psi^n. \nonumber 
\end{align}
This Laplacian can be solved efficiently by the FFT. In fact, any alternative $\boldsymbol{F}^n$ satisfying 
\begin{equation}
\sum_{\ell=1}^M q^\ell \kappa \left(  \int_{t_n}^{t^{n+1}} \tc^{\ell}dt- \dt c^{\ell,n} \right) +  \nabla_h \cdot \boldsymbol{F}^n = 0 \nonumber  
\end{equation}
is acceptable. With the correction $\boldsymbol{F}^n$, we can obtain $\boldsymbol{D}^*$:
\begin{equation}
2\kappa^2 (\boldsymbol{D}^*-\boldsymbol{D}^n)=\sum_{\ell=1}^M q^\ell\left(\kappa\nabla_h \int_{t_n}^{t^{n+1}} \tc^{\ell}dt - \dt \bJ^{\ell,n} -\kappa\dt \nabla_h c^{\ell,n}\right)+ \boldsymbol{F}^n+\dt\boldsymbol{\Theta}^n. \label{D* equation}
\end{equation}
%The combination of \eqref{ETD1} and \eqref{D* equation} is the first part of our numerical method:
%\begin{align}
%& c^{\ell,n+1}=c^{\ell,n}+L_h^{-1}(I-e^{-L_h \dt})M^n_h c^{\ell,n+1}, \quad \ell=1,\cdots,M. \label{scheme 1} \\
%& 2\kappa^2 (\boldsymbol{D}^*-\boldsymbol{D}^n)=\sum_{\ell=1}^M q^\ell\left(\kappa\nabla_h \int_{t_n}^{t^{n+1}} \tc^{\ell}dt - \dt \bJ^{\ell,n} -\kappa\dt \nabla_h c^{\ell,n}\right)+ \boldsymbol{F}^n+\dt\boldsymbol{\Theta}^n. \label{scheme 2}
%\end{align}
$\boldsymbol{\Theta}^n$ can be initialized as any divergence-free vector since we have a further process to correct $\boldsymbol{D}^*$. However, an appropriate $\boldsymbol{\Theta}^n$ leads to more precise $\boldsymbol{D}^*$ that is closer to the desired $\boldsymbol{D}^{n+1}$. Several choices have been tested in \cite{Qiao_MANP_model_2023} and we use $\boldsymbol{\Theta}^n$ based on the same idea:
\begin{equation}
\dt \boldsymbol{\Theta}^n = 2\kappa^2(\boldsymbol{D}^n-\boldsymbol{D}^{n-1})-\sum_{\ell=1}^M q^\ell\left(\kappa\nabla_h \int_{t_{n-1}}^{t^n} \tc^{\ell} dt -\dt \bJ^{\ell,n-1} - \kappa\dt \nabla_h c^{\ell,n-1}\right)-\boldsymbol{F}^{n-1}. \nonumber
\end{equation}
The combination of \eqref{ETD1} and \eqref{D* equation} is the first part of our numerical method. Then we will introduce the second part to get $\bD^{n+1}$. 

\subsection{Local curl-free algorithm}~\label{local}
By numerical scheme \eqref{ETD1} and \eqref{D* equation}, we have got $c^{\ell,n+1}$ and $\bD^*$ close to $\bD^{n+1}$. The correction function $\boldsymbol{F}$ ensures that the approximated electric displacement $\bD^*$ maintains the Gauss's law. Furthermore, $\bD^*$ also needs to satisfy the curl-free constraint. To address this, a local curl-free algorithm has been proposed with Gauss-law satisfying property starting from $\boldsymbol{D}^*$. Initially utilized for simulating charged particles, this algorithm modifies the dynamics to enable Coulombic equilibrium using a local update \cite{MR:PRL:2002}. It has also been applied to numerically solve the MANP equations, as referenced in conjunction with a decoupled semi-implicit finite difference method \cite{Qiao_NumericalMANP_2023}. Let us now recall the main steps of this algorithm.

The idea is that the discrete curl-free condition is equivalent to a convex constraint optimization problem
\begin{equation}
\bD^{n+1} = \min_{\bD \in \mathcal{E}_{\Omega}} \mathcal{F}_{pot}(\bD):=h^2\kappa^2 \sum_{i,j} \left( \frac{D^2_{i+\frac{1}{2},j}}{\varepsilon_{i+\frac{1}{2},j}} + \frac{D^2_{i,j+\frac{1}{2}}}{\varepsilon_{i,j+\frac{1}{2}}} \right),\quad \text{s.t.}\quad  2\kappa^2 \nabla_h \cdot \bD =\rho^{n+1},  \nonumber
\end{equation}
where $\rho^{n+1}=\sum_{\ell=1}^M q^\ell c^{\ell,n+1} + \rho^f$.
Introducing the Lagrangian
\begin{equation}
\mathcal{L}(\bD, \phi) := h^2\kappa^2 \sum_{i,j} \left( \frac{D^2_{i+\frac{1}{2},j}}{\varepsilon_{i+\frac{1}{2},j}} + \frac{D^2_{i,j+\frac{1}{2}}}{\varepsilon_{i,j+\frac{1}{2}}} \right) + h^2 \sum_{i,j} \phi_{i,j}\left( 2\kappa^2\nabla_h \cdot \bD - \rho^{n+1} \right)_{i,j}, \nonumber
\end{equation}
where $\phi$ is the Lagrange multiplier. it is straightforward that the minimizer $\bD^{n+1}\in\mathcal{E}_{\Omega}$ satisfies
\begin{equation}
\bD^{n+1} = - \varepsilon \nabla_h \phi, \quad 2\kappa^2\nabla_h \cdot \bD^{n+1} = \rho^{n+1}. \nonumber
\end{equation}
Accordingly, the curl-free condition $\nabla_h \times (\bD / \varepsilon)=0$ is fulfilled. By the convexity of functional $\mathcal{F}_{pot}$, the minimizer is unique. The above derivation provides an optimization approach starting from $\bD^*$, which is given in \eqref{D* equation}, to reach the curl-free $\bD^{n+1}$. 
In detail, the local relaxation updates the electric displacement in every mesh cell successively to minimize $\mathcal{F}_{pot}$ while maintaining the Gauss's law. For example, consider a single cell $(i+\delta_i,j+\delta_j)$, $\delta_i,\delta_j\in \{0,1\}$. The displacements defined on four edges are $D_{i+\hf,j}$, $D_{i,j+\hf}$, $D_{i+\hf,j+1}$ and $D_{i+1,j+\hf}$. We can update the electric displacements by
\begin{align}
D_{i+\hf,j} & \leftarrow D_{i+\hf,j}+\frac{\eta}{h}, \nonumber \\
D_{i,j+\hf} & \leftarrow D_{i,j+\hf}-\frac{\eta}{h}, \nonumber \\
D_{i+\hf,j+1} & \leftarrow D_{i+\hf,j+1}-\frac{\eta}{h}, \nonumber \\
D_{i+1,j+\hf} & \leftarrow D_{i+1,j+\hf}+\frac{\eta}{h}, \nonumber 
\end{align}
where $\eta$ is determined by the local minimizer of the energy $\mathcal{F}_{pot}$ and has an explicit expression
\begin{equation}
\frac{\eta}{h} = -\frac{\frac{D_{i+\hf,j}}{\varepsilon_{i+\hf,j}} -  \frac{D_{i+\hf,j+1}}{\varepsilon_{i+\hf,j+1}} + \frac{D_{i+1,j+\hf}}{\varepsilon_{i+1,j+\hf}} -\frac{D_{i,j+\hf}}{\varepsilon_{i,j+\hf}} }{\frac{1}{\varepsilon_{i+\hf,j}}+\frac{1}{\varepsilon_{i+\hf,j+1}}+\frac{1}{\varepsilon_{i,j+\hf}}+\frac{1}{\varepsilon_{i+1,j+\hf}}}. \nonumber
\end{equation}
%which is derived from the negative minimum of $ \delta \mathcal{F}_{pot}(\eta) $ given by
%\begin{align}
%\delta \mathcal{F}_{pot}(\eta)= & \frac{\eta^2}{2}\left(\frac{1}{\varepsilon_{i+\frac{1}{2}, j}}+\frac{1}{\varepsilon_{i+\frac{1}{2}, j+1}}+\frac{1}{\varepsilon_{i, j+\frac{1}{2}}}+\frac{1}{\varepsilon_{i+1, j+\frac{1}{2}}}\right) \nonumber \\
% & +\eta\left[h \left(\frac{D_{i+\frac{1}{2}, j}}{\varepsilon_{i+\frac{1}{2}, j}}-\frac{D_{i+\frac{1}{2}, j+1}}{\varepsilon_{i+\frac{1}{2}, j+1}}\right)+h\left(\frac{D_{i+1, j+\frac{1}{2}}}{\varepsilon_{i+1, j+\frac{1}{2}}}-\frac{D_{i, j+\frac{1}{2}}}{\varepsilon_{i, j+\frac{1}{2}}}\right)\right] \nonumber
%\end{align}
%In addition, since $\mathcal{F}_{pot}$ is nonnegative, this local update has excellent stability. 
The discrete Gauss’s law is strictly maintained because of the same flux of inflow and outflow during the update. Such iteration traverses every cell until the energy $\mathcal{F}_{pot}$ decreases below the tolerance.
\begin{remark}
    The curl-free relaxation essentially minimizes the energy functional through local method while maintaining the constraint. It is a new approach to solving the Poisson's equation coupled with the dynamics of mass. For example, in [27], the Poisson's part in the Vlasov-Poisson equations is equivalently reformulated as Amp\`ere system and is numerically solved based on a group of curl-free basis. The Vlasov-Amp\`ere system can also be treated efficiently by the local curl-free relaxation.
\end{remark}

We summarize the whole numerical method for the MANP equations in Algorithm \ref{algorithm:MANP}.
\begin{algorithm}\label{algorithm:MANP}
\begin{algorithmic}[1]
    \Require Final time $T$, tolerance $\varepsilon_{\text{tol}}$ for curl-free relaxation, initial value $ c^{\ell,0}>0, ~\ell = 1,\cdots,M$, and $\bD^0$ satisfying the discrete Gauss's law.
	\State Solve the implicit ETD scheme \eqref{ETD1} by Picard iteration and obtain $ c^{\ell,n+1}, ~\ell = 1,\cdots,M$ and calculate $\boldsymbol{\Theta}^n$ and $\boldsymbol{F}^n$.
	\State Based on $c^{\ell,n+1}$, $\boldsymbol{\Theta}^n$ and $\boldsymbol{F}^n$, calculate the the interim electric displacement $\bD^*$ in \eqref{D* equation}.
	\State Perform curl-free relaxation starting from $\bD^*$ to obtain $\bD^{n+1}$
	\State If $T \leq t^{n+1}$ then return step 1, otherwise stop.  
\end{algorithmic}
\caption{The implicit ETD1 method for the MANP equations}
\end{algorithm}

\section{Analysis on structure-preserving properties}  \label{section structure-preserving properties}
\subsection{Unique solvability}
Now we prove the unique solvability of implicit ETD scheme \eqref{ETD1} which can be regarded as a fixed point problem. The Picard iteration is utilized for the reason that the matrix needs to be solved has strong stiffness. Note that it requires a contractive property and that is not unconditional. The following lemma and theorem are proposed to analyze the constraint.  

 %Here we regard a mesh function as a column vector with all of the components and regard linear operators, like $L_h$, $e^{-L_h \dt}$ and $M_h^n$, as matrix. 
For any linear operators $A$, the infinite norm is defined as
$$
\| A \|_\infty := \max_{\|v\|_\infty=1} \| Av \|_\infty = \max_{i} \sum_{j} |A_{i,j}|.
$$
The first lemma is the estimation for the matrix function $E:=\dt f_e(L_h)$ where the function $f_e$ is defined in \eqref{matrix function}.

\begin{lemma}\label{infinite norm estimate lemma}
Given $L_h$ defined in \eqref{Matrix Mn} and function $f_e$ defined in \eqref{matrix function}, the matrix $f_e(\dt L_h)$ is positive and has the following estimate
\begin{equation}
\| \dt f_e(\dt L_h) \|_\infty = \frac{1}{\lambda}\left(1-e^{-\lambda \dt}\right) \leq \dt. \label{infinite norm estimate}
\end{equation}
\end{lemma}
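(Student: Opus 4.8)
The plan is to recognize $E := \dt\, f_e(\dt L_h)$ as a time integral of a matrix exponential, to establish its entrywise positivity from the structure of the discrete Laplacian under periodic boundary conditions, and then to read off the infinity norm from its (constant) row sums. Note first that since $\dt>0$, positivity of $f_e(\dt L_h)$ is equivalent to positivity of $E$, so it suffices to treat $E$.

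First I would rewrite $E$ using the definition~\eqref{matrix function} of $f_e$ together with part~$(5)$ of Lemma~\ref{Matrix function lemma}, exactly as in the derivation preceding~\eqref{ETD1}:
\begin{equation}
E = \dt\, f_e(\dt L_h) = L_h^{-1}\left(I - e^{-\dt L_h}\right) = \int_0^{\dt} e^{-L_h s}\,ds. \nonumber
\end{equation}
This representation reduces the whole claim to understanding $e^{-L_h s}$ for $s \ge 0$.

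The main step, and the one I expect to be the principal obstacle, is to show that $e^{-L_h s}$ is entrywise nonnegative for every $s \ge 0$. Here I would exploit that $-L_h = \kappa \Dh - \lambda I$ is an essentially nonnegative (Metzler) matrix: under periodic boundary conditions the discrete Laplacian $\Dh$ has nonpositive diagonal entries $-4/h^2$ and nonnegative off-diagonal neighbor entries $1/h^2$. Splitting off a scalar multiple of the identity,
\begin{equation}
-L_h = -\left(\tfrac{4\kappa}{h^2} + \lambda\right) I + B, \qquad B \ge 0 \ \text{entrywise}, \nonumber
\end{equation}
where $B$ collects the nonnegative neighbor couplings, one obtains
\begin{equation}
e^{-L_h s} = e^{-\left(\frac{4\kappa}{h^2}+\lambda\right)s}\, e^{sB}, \nonumber
\end{equation}
and $e^{sB} = \sum_{k\ge 0} (sB)^k/k! \ge 0$ entrywise because $B \ge 0$ and $s \ge 0$; strict positivity for $s>0$ follows from irreducibility of $B$ on the connected periodic grid. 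Integrating over $s \in [0,\dt]$ then gives $E \ge 0$ entrywise, indeed $E>0$, which is the asserted positivity.

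Finally I would compute the norm. Since $E \ge 0$ entrywise, $\|E\|_\infty$ equals its maximum row sum, i.e. $\|E\|_\infty = \|E\mathbf{1}\|_\infty$ with $\mathbf{1}$ the all-ones vector. Because $\Dh \mathbf{1} = \mathbf{0}$ under periodic boundary conditions, $\mathbf{1}$ is an eigenvector of $L_h$ with eigenvalue $\lambda$; since a matrix function acts on an eigenvector through the corresponding eigenvalue (cf. Lemma~\ref{Matrix function lemma}), $E\mathbf{1} = \dt\, f_e(\dt \lambda)\,\mathbf{1} = \tfrac{1}{\lambda}\left(1-e^{-\lambda \dt}\right)\mathbf{1}$. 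Hence every row sum equals $\tfrac{1}{\lambda}(1-e^{-\lambda\dt})$ and $\|E\|_\infty = \tfrac{1}{\lambda}(1-e^{-\lambda\dt})$. The bound $\tfrac{1}{\lambda}(1-e^{-\lambda\dt}) \le \dt$ is then immediate from the elementary inequality $1-e^{-x}\le x$ for $x\ge 0$ applied at $x = \lambda\dt$.
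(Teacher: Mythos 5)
Your proposal is correct and follows essentially the same route as the paper's proof: the integral representation $\dt\, f_e(\dt L_h)=\int_0^{\dt} e^{-L_h s}\,ds$, splitting $-L_h$ into a scalar diagonal shift plus an entrywise nonnegative coupling matrix whose exponential series gives positivity, and computing the row sums via $\Delta_h \mathbf{1}=\mathbf{0}$ together with $1-e^{-x}\le x$. The only differences are cosmetic refinements on your part (you carry the $1/h^2$ scaling explicitly, invoke irreducibility for strict positivity, and evaluate $E\mathbf{1}$ by the eigenvalue $\lambda$ rather than by expanding $e^{\kappa\Delta_h t}\mathbf{1}$ inside the integral, which is the same computation).
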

\begin{proof}
 For the positivity, since $ \dt f_e(\dt L_h)=\int_0^\dt e^{-L_h t} dt$, we only need to prove the positivity of the matrix exponential function $e^{-L_h}$. It follows from the definition of $e^{-L_h}$ that
\begin{align}
e^{-L_h} &= e^{\kappa \Delta_h - \lambda I}=e^{-\lambda} e^{\kappa \Delta_h} \nonumber \\
& = e^{-\lambda} e^{\kappa(\alpha I + Q)} \nonumber \\
& = e^{-\lambda+\kappa\alpha} e^{\kappa Q}, \nonumber
\end{align}
where $\alpha I$ is the diagonal part of the Laplacian with $\alpha = -2$ in 1-D and $\alpha = -4$ in 2-D. $Q$ denotes the non-diagonal part which is a non-negative matrix with component either zero or one. The series expansion
\begin{equation}
e^{\kappa Q} = I+\sum_{k=1}^\infty \frac{1}{k!}(\kappa Q)^k, \nonumber
\end{equation}
ensures the positivity of $e^{\kappa Q}$. With an additional coefficient, the positivity of $e^{-L_h}$ is proved. 

Then we prove the estimate \eqref{infinite norm estimate} of $f_e(\dt L_h)$. Let $\boldsymbol{1}$ be the vector whose every component is one. Since the positivity of $f_e(\dt L_h)$ has already proved, the infinite norm can be calculated directly as 
\begin{align}
\|\dt f_e(\dt L_h)\|_\infty & = \max_i (\dt f_e(\dt L_h) \cdot \boldsymbol{1})_i = \max_i \int_0^\dt (e^{-L_h t}\cdot \boldsymbol{1})_i dt \nonumber \\
& = \max_i \int_0^\dt (e^{-\lambda t} e^{\kappa \Delta_h t}\cdot \boldsymbol{1})_i dt \nonumber \\
& = \int_0^\dt e^{-\lambda t} dt. \nonumber
\end{align}
The last equation is based on the expansion of the exponential function:
\begin{equation}
e^{\kappa \Delta_h t}\cdot \boldsymbol{1} = \boldsymbol{1} + \sum_{k=1}^\infty \frac{1}{k!} (\kappa t \Delta_h)^k \cdot \boldsymbol{1} = \boldsymbol{1}. \nonumber
\end{equation}
Then we have \eqref{infinite norm estimate} and complete the proof.
\end{proof}

With the infinite norm estimation in Lemma \ref{infinite norm estimate lemma}, we are ready to give the following theorem for unique solvability.
\begin{theorem}\label{solvability theorem}
Given $c^{\ell,n}$ and $\bD^n$, the implicit ETD1 scheme \eqref{ETD1} is a linear fixed point problem. If time step $\dt$ and space step $h$ satisfy the condition
\begin{equation}
\frac{\dt}{h^2} < \frac{1}{16\kappa B\left(-\max_{i,j,\ell}\left(|dg^{\ell,n}_{i+1/2,j}|,|dg^{\ell,n}_{i,j+1/2}| \right)\right)},   \label{CFL condition 1}
\end{equation}
then the matrix $\dt f_e(\dt L_h)M^n_h$ is contractive map, which means the fixed point is unique and the Picard iteration is valid as well.
\end{theorem}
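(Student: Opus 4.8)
The plan is to read \eqref{ETD1} as an affine fixed-point equation $c=\mathcal{T}(c)$ with
$$\mathcal{T}(c):=c^{\ell,n}+\dt f_e(\dt L_h)M_h^n c,$$
where $c^{\ell,n}$ is the given data and both $E:=\dt f_e(\dt L_h)$ and $M_h^n$ are linear operators on the finite-dimensional space $\mathcal{C}_{\Omega}$; this already exhibits \eqref{ETD1} as a linear fixed-point problem. Since $\mathcal{T}$ is affine, $\|\mathcal{T}(c_1)-\mathcal{T}(c_2)\|_\infty=\|E M_h^n(c_1-c_2)\|_\infty$, so the whole theorem reduces to the single operator bound $\|E M_h^n\|_\infty<1$ under \eqref{CFL condition 1}. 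Once this is in hand, both the uniqueness of the fixed point and the validity (indeed geometric convergence) of the Picard iteration follow from the Banach fixed-point theorem applied on $(\mathcal{C}_{\Omega},\|\cdot\|_\infty)$, which is complete.

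First I would use submultiplicativity of the induced infinity norm, $\|E M_h^n\|_\infty\le\|E\|_\infty\,\|M_h^n\|_\infty$, and dispose of the first factor immediately by Lemma \ref{infinite norm estimate lemma}, which gives $\|E\|_\infty=\lambda^{-1}(1-e^{-\lambda\dt})\le\dt$. It then remains to estimate $\|M_h^n\|_\infty$, that is, the maximal absolute row sum of the matrix representing $M_h^n f=-\nabla_h\cdot\bJ(dg^\ell(t_n),f)$.

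The key step, and the main technical obstacle, is this row-sum estimate of $M_h^n$. I would expand $-\nabla_h\cdot\bJ$ using the explicit flux \eqref{flux}; a direct computation of $-d_x J^\ell$ (and analogously $-d_y J^\ell$) shows that in two dimensions each row of $M_h^n$ has only finitely many nonzero entries: positive off-diagonal coefficients such as $\tfrac{\kappa}{h^2}B(-dg^\ell_{i+1/2,j})$ and $\tfrac{\kappa}{h^2}B(dg^\ell_{i-1/2,j})$ attached to the neighbouring nodes (positivity coming from \eqref{Function B}), together with a negative diagonal coefficient $-\tfrac{\kappa}{h^2}\big(B(dg^\ell_{i+1/2,j})+B(-dg^\ell_{i-1/2,j})+\cdots\big)$. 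Hence the absolute row sum equals $\tfrac{\kappa}{h^2}$ times a sum of Bernoulli values $B(\pm dg^\ell)$. I would then invoke two elementary properties of $B$: positivity and strict monotone decrease. Monotonicity yields, for every half-grid value $w$ with $|w|\le G:=\max_{i,j,\ell}(|dg^{\ell,n}_{i+1/2,j}|,|dg^{\ell,n}_{i,j+1/2}|)$, both $B(w)\le B(-G)$ and $B(-w)\le B(-G)$; bounding each Bernoulli term in this way gives $\|M_h^n\|_\infty\le \tfrac{C\kappa}{h^2}B(-G)$ for an explicit constant $C$ determined by the per-row count of Bernoulli terms.

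Combining the two estimates produces $\|E M_h^n\|_\infty\le \tfrac{C\kappa\dt}{h^2}B(-G)$, and the CFL-type condition \eqref{CFL condition 1} is precisely what forces this quantity below one, the constant in \eqref{CFL condition 1} absorbing $C$ together with a margin of safety that in fact bounds the contraction factor by $\tfrac12$. This establishes the contraction, whence unique solvability and convergence of the Picard iteration follow as described above. The only delicate points are the faithful bookkeeping of signs and neighbour indices when differencing the flux \eqref{flux} (to identify which $B(\pm dg^\ell)$ sits on each entry), and the clean use of monotonicity of $B$ to pass from the pointwise $dg^\ell$ to the global maximum $G$; everything else is routine.
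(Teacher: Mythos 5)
Your proposal is correct and follows essentially the same route as the paper's proof: submultiplicativity of the induced infinity norm, Lemma \ref{infinite norm estimate lemma} for $\|\dt f_e(\dt L_h)\|_\infty\le\dt$, an explicit row-sum bookkeeping of $M_h^n$ giving entries $\tfrac{\kappa}{h^2}B(\pm dg^{\ell,n})$, and monotonicity of the Bernoulli function to bound each term by $B(-G)$, yielding the contraction factor below $\tfrac12$ under \eqref{CFL condition 1}. The only detail you leave implicit is that the per-row count is $C=8$ (four positive off-diagonal terms plus four terms in the diagonal), which is exactly how the paper's constant $16$ in \eqref{CFL condition 1} arises.
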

\begin{proof}
This theorem is objected to estimate the norm of $\dt f_e(\dt L_h) M_h^n$ where $M_h^n$ is given in \eqref{Matrix Mn}. With Lemma \ref{infinite norm estimate lemma} for infinite norm of $\dt f_e(\dt L_h)$, we only need to estimate $\| M_h^n \|_\infty$. For simplicity, we use the index notation
$$
[i,j]:=(i-1)N_y+j, \quad i=1,\cdots ,N_x,\  j=1,\cdots,N_y,
$$
for periodic boundary condition:
\begin{equation}
\begin{array}{l}
i^{+}=\left\{\begin{array}{l}
i+1 \text {~~~for~~} i=1, \ldots, N_x-1, \\
1 \text {~~~~for~~} i=N_x,
\end{array} \qquad i^{-}=\left\{\begin{array}{l}
i-1 \text {~~~for~~} i=2, \ldots, N_x, \\
N_x \text {~~~~for~~} i=1,
\end{array}\right.\right. \\
j^{+}=\left\{\begin{array}{l}
j+1 \text {~~~for~~} j=1, \ldots, N_y-1, \\
1 \text {~~~~for~~} j=N_y,
\end{array} \qquad j^{-}=\left\{\begin{array}{l}
j-1 \text{~~~for~~} j=2, \ldots, N_y, \\
N_y \text{~~~~for~~} j=1.
\end{array}\right.\right. \\
\end{array} \nonumber
\end{equation}
We study the elements of each row of $M_h^n$. The non-zero entries of the $k$-th $(k=[i,j])$ row are given by
\begin{equation}
\frac{h^2}{\kappa}(M_h^n)_{k, m} = \left\{
\begin{array}{ll}
B(dg^{\ell,n}_{i-1/2,j}), &  m =\left[i^-, j\right]\\
B(dg^{\ell,n}_{i,j-1/2}), &  m =\left[i, j^-\right]\\
-B(dg^{\ell,n}_{i+1/2,j})-B(-dg^{\ell,n}_{i-1/2,j})-B(dg^{\ell,n}_{i,j+1/2})-B(-dg^{\ell,n}_{i,j-1/2}), & m=k  \\
B(-dg^{\ell,n}_{i,j+1/2}), & m =\left[i^+, j\right]\\
B(-dg^{\ell,n}_{i+1/2,j}), & m =\left[i, j^+\right]
\end{array}
\right. \nonumber
\end{equation}
where $B(z)$ and $dg^{\ell,n}$ are defined in \eqref{Function B} and \eqref{entropic mean 1} respectively. The estimation is straightforward
\begin{align}\label{estimate M}
\| M_h^n \|_\infty = \max_i \sum_j |(M_h^n)_{i,j}| \leq \frac{8\kappa}{h^2}B\left(-\max_{i,j,\ell}\left(\left|dg^{\ell,n}_{i+\frac{1}{2},j}\right|,\left|dg^{\ell,n}_{i,j+\frac{1}{2}}\right| \right)\right),
\end{align}
where the monotonicity of function $B(z)$ is used in the last inequality. Now by direct calculation, we have
\begin{align}
\|\dt f_e(\dt L_h) M_h^n\|_\infty & \leq \|\dt f_e(\dt L_h)\|_\infty \times \|M_h^n\|_\infty  \nonumber \\
& \leq \frac{8\kappa \dt}{h^2}B\left(-\max_{i,j,\ell}\left(\left|dg^{\ell,n}_{i+\frac{1}{2},j}\right|,\left|dg^{\ell,n}_{i,j+\frac{1}{2}}\right| \right)\right) < \frac{1}{2} <1.\nonumber
\end{align}
Therefore, $\dt f_e(\dt L_h) M_h^n$ is a contraction in the sense of the infinite norm and has unique fixed point which is equivalent to the solution of \eqref{ETD1}.%{\color{blue}\eqref{ETD1}}.
\end{proof} 

\subsection{Mass conservation}
By calculating the numerical integral in both sides of \eqref{Slotboom_2} and applying the summation by parts formula, we have
\begin{align}
& \frac{\partial}{\partial t} (\tc^{\ell},1)+\lambda (\tc^{\ell},1)=\lambda (c^{\ell,n},1), \nonumber \\
& (\tc^{\ell}(t_n),1)=(c^{\ell,n},1).  \nonumber
\end{align}
Equivalently, it is an ODE system of $(\tc^{\ell},1)$ which can be denoted as $\bar{\tc}^{\ell}(t)$:
\begin{align}
& \frac{\partial}{\partial t} \bar{\tc}^{\ell} + \lambda \bar{\tc}^{\ell}=\lambda \bar{c}^{\ell,n}, \label{mass ODE1} \\
& \bar{\tc}^{\ell}(t_n)=\bar{c}^{\ell,n}.  \label{mass ODE2}
\end{align}
The ODE theory shows that equations \eqref{mass ODE1} and \eqref{mass ODE2} have a unique solution. Furthermore, it is observed that the constant function $\bar{\tc}^{\ell}(t) \equiv \bar{c}^{\ell,n}$ satisfies \eqref{mass ODE1} and \eqref{mass ODE2}. We can conclude that the unique solution is the constant function and 
\begin{equation}
\bar{c}^{\ell,n+1} = \bar{\tc}^{\ell}(t_{n+1}) = \bar{c}^{\ell,n}.\nonumber
\end{equation}
Then the mass conservation is satisfied.

\subsection{Positivity-preserving property}
Assuming that the conditions of Theorem \ref{solvability theorem} hold, we can rewrite \eqref{ETD1} as 
\begin{equation}
c^{\ell,n+1} = \left(I-\dt f_e(\dt L_h) M_h^n\right)^{-1} c^{\ell,n}.  \nonumber
\end{equation}
It has been proved that the matrix $I-\dt M_h^n$ is an M-matrix with $\left(I-\dt M_h^n\right)^{-1} \succ 0$~\cite{Qiao_NumericalMANP_2023}. As a result, if the difference between $\dt f_e(\dt L_h)$ and $\dt I$ is small enough, the matrix $I-\dt f_e(\dt L_h) M_h^n $ should be M-matrix as well. The following Lemma \ref{inverse operator continuity} gives the key that describes the continuity of the inverse operator.
%The motivation is the fact that the matrix $\dt f_e(\dt L_h)$ is an approximation of $\dt I$ and $\left(I-\dt M_h^n\right)^{-1}$ is positive based on the positivity of the inverse of M-matrix\cite{Qiao_NumericalMANP_2023}. If the difference between $\dt f_e(\dt L_h)$ and $\dt I$ is small enough, then $\left(I-\dt f_e(\dt L_h) M_h^n\right)^{-1}$ should be also positive. The key here is how to describe the continuity of the inverse operator, which is what the following lemma do.

\begin{lemma}\cite{MatrixInverse}  \label{inverse operator continuity}
For any matrix norm $\|\cdot\|$ with $\|I\|=1$, if $\|A^{-1}\| \|E\|<1$, then $A+E$ is invertible and 
\begin{equation}
\|(A+E)^{-1}-A^{-1}\| \leq \frac{\|A^{-1}\|^2\|E\|}{1-\|A^{-1}\| \|E\|}. \nonumber
\end{equation}
\end{lemma}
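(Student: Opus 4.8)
The plan is to reduce the statement to the convergence of a Neumann series and then invoke the second resolvent identity. First I would factor $A + E = A(I + A^{-1}E)$, which is legitimate because $A$ is invertible by hypothesis. The assumption $\|A^{-1}\|\,\|E\| < 1$ together with submultiplicativity of the norm gives $\|A^{-1}E\| \leq \|A^{-1}\|\,\|E\| < 1$, so the factor $I + A^{-1}E$ becomes the only object whose inverse I actually need to control.

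Next I would establish invertibility of $I + A^{-1}E$ via the Neumann series $\sum_{k=0}^{\infty} (-A^{-1}E)^k$. Submultiplicativity bounds the $k$-th term by $\|A^{-1}E\|^k$ for $k \geq 1$, while the $k=0$ term contributes exactly $\|I\| = 1$ by the normalization hypothesis; summing the resulting geometric series shows the partial sums form a Cauchy sequence that converges to $(I + A^{-1}E)^{-1}$, with
\begin{equation}
\bigl\| (I + A^{-1}E)^{-1} \bigr\| \leq \frac{1}{1 - \|A^{-1}E\|} \leq \frac{1}{1 - \|A^{-1}\|\,\|E\|}. \nonumber
\end{equation}
Consequently $A + E = A(I + A^{-1}E)$ is a product of invertible matrices, hence invertible, with $(A+E)^{-1} = (I + A^{-1}E)^{-1}A^{-1}$; this already settles the first assertion.

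For the quantitative estimate I would use the telescoping (second resolvent) identity
\begin{equation}
(A+E)^{-1} - A^{-1} = (A+E)^{-1}\bigl[A - (A+E)\bigr]A^{-1} = -(A+E)^{-1} E A^{-1}, \nonumber
\end{equation}
which is verified by multiplying out, since $A - (A+E) = -E$. Taking norms, applying submultiplicativity, and inserting the bound $\|(A+E)^{-1}\| \leq \|(I+A^{-1}E)^{-1}\|\,\|A^{-1}\| \leq \|A^{-1}\|/(1 - \|A^{-1}\|\,\|E\|)$ from the previous step yields
\begin{equation}
\bigl\| (A+E)^{-1} - A^{-1} \bigr\| \leq \|(A+E)^{-1}\|\,\|E\|\,\|A^{-1}\| \leq \frac{\|A^{-1}\|^2\,\|E\|}{1 - \|A^{-1}\|\,\|E\|}, \nonumber
\end{equation}
which is precisely the claimed inequality.

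None of the steps presents a genuine obstacle, as this is a standard perturbation fact; the step deserving the most care is the normalization $\|I\| = 1$, since it is exactly what collapses the geometric bound to $1/(1 - \|A^{-1}E\|)$ rather than the weaker $\|I\|/(1 - \|A^{-1}E\|)$, and without it the stated constant would not be attained. In the paper's application the relevant norm is $\|\cdot\|_\infty$, for which $\|I\|_\infty = 1$ holds, so the hypothesis is met.
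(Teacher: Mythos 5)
Your proof is correct, and it is the standard argument: the paper itself gives no proof of this lemma, citing it directly from the matrix-analysis literature, where exactly your route (factor $A+E=A(I+A^{-1}E)$, invert the second factor by a Neumann series, then apply the resolvent identity $(A+E)^{-1}-A^{-1}=-(A+E)^{-1}EA^{-1}$ and take norms) is the canonical one. Your closing remark correctly identifies the role of the normalization $\|I\|=1$ in obtaining the stated constant, and your implicit use of submultiplicativity is consistent with the hypothesis ``matrix norm,'' so there is nothing to repair.
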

Define $$
\alpha_{\min}:=\min_{i,j} \left[\left(I-\dt M_h^n\right)^{-1}\right]_{i,j}>0.
$$
Based on the definition of the infinite norm, it is noticed that if 
\begin{equation}
	\| \left(I-\dt f_e(\dt L_h) M_h^n\right)^{-1}-\left(I-\dt M_h^n\right)^{-1} \|_\infty \leq \alpha_{\min} \label{MatrixInverse 1}
\end{equation}
holds, we can get the positivity of $\left(I-\dt f_e(\dt L_h) M_h^n\right)^{-1}$. Furthermore, substituting $A$ with $I-\dt M_h^n$ and $E$ with $\dt(f_e(\dt L_h)-I) M_h^n$, Lemma~\ref{inverse operator continuity} tells that 
\begin{equation}
    \| \left(I-\dt f_e(\dt L_h) M_h^n\right)^{-1}-\left(I-\dt M_h^n\right)^{-1} \|_\infty \leq \frac{\|\left(I-\dt M_h^n\right)^{-1}\|_\infty^2 \|\dt(f_e(\dt L_h)-I) M_h^n\|_\infty}{1-\|\left(I-\dt M_h^n\right)^{-1}\|_\infty \|\dt(f_e(\dt L_h)-I) M_h^n\|_\infty}. \nonumber
\end{equation}
So that a sufficient condition of \eqref{MatrixInverse 1} is
\begin{equation}
	\frac{\|\left(I-\dt M_h^n\right)^{-1}\|_\infty^2 \|\dt(f_e(\dt L_h)-I) M_h^n\|_\infty}{1-\|\left(I-\dt M_h^n\right)^{-1}\|_\infty \|\dt(f_e(\dt L_h)-I) M_h^n\|_\infty} <\alpha_{\min}, \nonumber
\end{equation}
or equivalently
\begin{equation}
	\|\dt(f_e(\dt L_h)-I) M_h^n\|_\infty < \frac{\alpha_{\min}}{\|\left(I-\dt M_h^n\right)^{-1}\|_\infty^2+\alpha_{\min}\|\left(I-\dt M_h^n\right)^{-1}\|_\infty }.\label{positivity target 2}
\end{equation}
We remark that if \reff{positivity target 2} holds, the condition $\|\left(I-\dt M_h^n\right)^{-1}\|_\infty\|\dt(f_e(\dt L_h)-I) M_h^n\|_\infty<1$ of Lemma~\ref{inverse operator continuity} is satisfied naturally. The following Theorem \ref{PositivityTheorem} gives the proof of \eqref{positivity target 2} and corresponding conditions.
%A direct corollary is that
%\begin{equation}
%\| \left(I-\dt f_e(\dt L_h) M_h^n\right)^{-1}-\left(I-\dt M_h^n\right)^{-1} \|_\infty \leq \frac{\|\left(I-\dt M_h^n\right)^{-1}\|_\infty^2 \|\dt(f_e(\dt L_h)-I) M_h^n\|_\infty}{1-\|\left(I-\dt M_h^n\right)^{-1}\|_\infty \|\dt(f_e(\dt L_h)-I) M_h^n\|_\infty}. \nonumber
%\end{equation}
%Therefore, the target is to prove
%\begin{equation}
%\frac{\|\left(I-\dt M_h^n\right)^{-1}\|_\infty^2 \|\dt(f_e(\dt L_h)-I) M_h^n\|_\infty}{1-\|\left(I-\dt M_h^n\right)^{-1}\|_\infty \|\dt(f_e(\dt L_h)-I) M_h^n\|_\infty} < \min_{i,j} \left[\left(I-\dt M_h^n\right)^{-1}\right]_{i,j} =: \alpha_{\min}, \nonumber
%\end{equation}
%equivalently,
%\begin{equation}
%\|\dt(f_e(\dt L_h)-I) M_h^n\|_\infty < \frac{\alpha_{\min}}{\|\left(I-\dt M_h^n\right)^{-1}\|_\infty^2+\alpha_{\min}\|\left(I-\dt M_h^n\right)^{-1}\|_\infty }=: \alpha_M,\label{positivity target 2}
%\end{equation}
%which implies $\| \left(I-\dt f_e(\dt L_h) M_h^n\right)^{-1}-\left(I-\dt M_h^n\right)^{-1} \|_\infty < \alpha_{\min}$ and ensures the positivity of $\left(I-\dt f_e(\dt L_h) M_h^n\right)^{-1}$. The follwing theorem is the main conclusion of this section.
\begin{theorem} \label{PositivityTheorem}
Given $c^{\ell,n}>0$ and $\bD^n$, if time step $\dt$ and space step $h$ satisfy the condition \eqref{CFL condition 1} and
\begin{equation}
\left(\frac{4 \kappa }{h^2}+\frac{\lambda }{4} \right) \dt < \frac{\alpha_{\min}}{4+2\alpha_{\min}}, \label{final constraint}
\end{equation}
then the positivity $c^{\ell,n+1}>0, \ell=1,\cdots,M $ is preserved.
\end{theorem}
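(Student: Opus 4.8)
The plan is to reduce the whole theorem to verifying the single inequality \eqref{positivity target 2}, since the discussion preceding the statement already shows the chain \eqref{positivity target 2} $\Rightarrow$ \eqref{MatrixInverse 1} $\Rightarrow$ positivity of $\left(I-\dt f_e(\dt L_h)M_h^n\right)^{-1}$, whence $c^{\ell,n}>0$ forces $c^{\ell,n+1}=\left(I-\dt f_e(\dt L_h)M_h^n\right)^{-1}c^{\ell,n}>0$. The infinite-norm closeness \eqref{MatrixInverse 1} is read entrywise: every entry of $\left(I-\dt M_h^n\right)^{-1}$ is at least $\alpha_{\min}$, and \eqref{MatrixInverse 1} (which will in fact be strict under \eqref{final constraint}) says the two inverses differ by less than $\alpha_{\min}$ in each entry, so the perturbed inverse is strictly positive and $c^{\ell,n+1}$ inherits strict positivity.

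For the lower bound on the right-hand side of \eqref{positivity target 2} I would first control $\|\left(I-\dt M_h^n\right)^{-1}\|_\infty$. Condition \eqref{CFL condition 1} combined with the row estimate \eqref{estimate M} gives $\|\dt M_h^n\|_\infty<\tfrac12$, so the Neumann series yields $\|\left(I-\dt M_h^n\right)^{-1}\|_\infty\le \bigl(1-\|\dt M_h^n\|_\infty\bigr)^{-1}<2$. Since the right-hand side of \eqref{positivity target 2} is decreasing in $\|\left(I-\dt M_h^n\right)^{-1}\|_\infty$, this bounds it below by $\alpha_{\min}/(4+2\alpha_{\min})$. Hence it suffices to prove $\|E\|_\infty\le \alpha_{\min}/(4+2\alpha_{\min})$ for $E:=\dt\bigl(f_e(\dt L_h)-I\bigr)M_h^n$, and by \eqref{final constraint} it is in turn enough to show $\|E\|_\infty\le(4\kappa/h^2+\lambda/4)\dt$.

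The heart of the matter is the infinite-norm estimate of the dense matrix $\dt\bigl(f_e(\dt L_h)-I\bigr)$. I would use the integral representation $\dt f_e(\dt L_h)=\int_0^\dt e^{-L_h t}\,dt$ from Lemma \ref{infinite norm estimate lemma}, so that $\dt\bigl(f_e(\dt L_h)-I\bigr)=\int_0^\dt\bigl(e^{-L_h t}-I\bigr)\,dt$, and split the factorization $e^{-L_h t}-I=e^{-\lambda t}\bigl(e^{\kappa\Delta_h t}-I\bigr)+\bigl(e^{-\lambda t}-1\bigr)I$ into a diffusive and a scalar part. The scalar part integrates to $\dt-\lambda^{-1}(1-e^{-\lambda\dt})\le\lambda\dt^2/2$. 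For the diffusive part I would exploit the facts established inside the proof of Lemma \ref{infinite norm estimate lemma}, namely that $e^{\kappa\Delta_h t}$ is nonnegative with $e^{\kappa\Delta_h t}\boldsymbol 1=\boldsymbol 1$, i.e. row-stochastic with $\|e^{\kappa\Delta_h t}\|_\infty=1$; writing $e^{\kappa\Delta_h t}-I=\int_0^t\kappa\Delta_h e^{\kappa\Delta_h s}\,ds$ and using $\|\Delta_h\|_\infty=8/h^2$ gives $\|e^{\kappa\Delta_h t}-I\|_\infty\le 8\kappa t/h^2$. Integrating produces $\|\dt\bigl(f_e(\dt L_h)-I\bigr)\|_\infty\le(4\kappa/h^2+\lambda/2)\dt^2$. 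Multiplying by $\|M_h^n\|_\infty$ and inserting the bound $\dt\|M_h^n\|_\infty<\tfrac12$ collapses the extra $\dt$ and yields $\|E\|_\infty\le(2\kappa/h^2+\lambda/4)\dt$, which is dominated by $(4\kappa/h^2+\lambda/4)\dt$; together with \eqref{final constraint} this closes the argument.

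The main obstacle I anticipate is keeping every estimate in the infinite norm, the norm in which the M-matrix and positivity structure live. Spectral (2-norm) bounds on $f_e(\dt L_h)-I$ are immediate because $L_h$ is symmetric, but they are useless for the entrywise perturbation of $\left(I-\dt M_h^n\right)^{-1}$; the row-stochasticity of $e^{\kappa\Delta_h t}$ is precisely what lets the matrix-exponential increment be controlled in $\|\cdot\|_\infty$. The remaining delicacy is purely the bookkeeping of constants—tracking where the factor $\tfrac12$ coming from \eqref{CFL condition 1} is spent—so that the diffusive and scalar sub-estimates assemble within the coefficient $(4\kappa/h^2+\lambda/4)$ of \eqref{final constraint}; this is routine once the two norm estimates above are in hand.
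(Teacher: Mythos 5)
Your proposal is correct, and while its outer skeleton coincides with the paper's (reduce everything to \eqref{positivity target 2}; use $\dt\|M_h^n\|_\infty<\tfrac12$ from \eqref{CFL condition 1} and \eqref{estimate M} plus the Neumann series to get $\|(I-\dt M_h^n)^{-1}\|_\infty<2$, so the right-hand side of \eqref{positivity target 2} is at least $\alpha_{\min}/(4+2\alpha_{\min})$), your treatment of the central quantity $\|\dt(f_e(\dt L_h)-I)\|_\infty$ takes a genuinely different route. The paper exploits the sign pattern of $\dt(f_e(\dt L_h)-I)$ (positive off-diagonal, negative diagonal) to obtain the exact row-sum identity \eqref{E-s1}, then bounds the diagonal entries below by the minimal eigenvalue using symmetry, computes the eigenvalues of $L_h$ explicitly in Fourier space as in \eqref{E-s3}, and finishes with the Lipschitz bound $|f_e(x)-f_e(y)|\le\tfrac12|x-y|$, landing on $\dt\left(\frac{8\kappa\dt}{h^2}+\frac{\lambda\dt}{2}\right)$. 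You instead stay with the integral representation $\dt\bigl(f_e(\dt L_h)-I\bigr)=\int_0^{\dt}\bigl(e^{-L_h t}-I\bigr)\,dt$, split off the scalar relaxation $\bigl(e^{-\lambda t}-1\bigr)I$, and control the diffusive increment by the Duhamel-type identity $e^{\kappa\Delta_h t}-I=\int_0^t\kappa\Delta_h e^{\kappa\Delta_h s}\,ds$ together with row-stochasticity of $e^{\kappa\Delta_h t}$ and $\|\Delta_h\|_\infty=8/h^2$, obtaining $\left(\frac{4\kappa}{h^2}+\frac{\lambda}{2}\right)\dt^2$ --- a factor-of-two sharper bound in the diffusive term, so your final $\|E\|_\infty\le\left(\frac{2\kappa}{h^2}+\frac{\lambda}{4}\right)\dt$ meets \eqref{final constraint} with room to spare. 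Your route buys elementarity and robustness: no spectral computation is needed (the paper's eigenvalue formula \eqref{E-s3} is written for $N_x=N_y=N$ with periodic boundary conditions, whereas semigroup nonnegativity, the mass identity $e^{\kappa\Delta_h t}\boldsymbol{1}=\boldsymbol{1}$, and the crude $\|\Delta_h\|_\infty$ bound survive on more general grids), and, as you rightly stress, every estimate stays in the $\infty$-norm where the M-matrix and entrywise-positivity structure live. What the paper's route buys is the exact identity \eqref{E-s1} as a starting point, which in principle allows eigenvalue-resolved sharper constants, though after its Lipschitz relaxation it ends up with a weaker constant than yours. One point you flagged that deserves to stay explicit in a final write-up: \eqref{MatrixInverse 1} as stated with ``$\le$'' only yields nonnegativity of the perturbed inverse, so strict positivity of $c^{\ell,n+1}$ needs either the strict inequality supplied by \eqref{final constraint} or the observation that a nonnegative invertible matrix has no zero row and hence maps strictly positive vectors to strictly positive vectors.
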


\begin{proof}
	To achieve \eqref{positivity target 2}, we need to estimate $\|\dt f_e(\dt L_h)-\dt I\|_\infty$ and $\|M_h^n\|_\infty$. On the one hand, from \eqref{CFL condition 1}-\eqref{estimate M}, it has been shown that 
	\begin{equation}
		\dt \|M_h^n\|_\infty < 1/2.  \label{Mn_infty}
	\end{equation}
On the other hand, matrix $\dt (f_e(\dt L_h)-I)$ is symmetric and has negative diagonal elements and positive off-diagonal elements, provided by the positivity of $f_e(\dt L_h)$ and $\|\dt f_e(\dt L_h)\|_\infty < \dt$ in the proof of Lemma \ref{infinite norm estimate lemma}. Then
\begin{align}
	\| \dt f_e(\dt L_h)-\dt I \|_\infty = \frac{1-e^{-\lambda \dt}}{\lambda}-\dt (2\min_i f_e(\dt L_h)_{i,i}-1). \label{E-s1} 
\end{align}
Furthermore, the symmetry of $f_e(\dt L_h)$ means that the diagonal elements are located between the maximum and minimum eigenvalues, i.e.
\begin{equation}
	\min_i f_e(\dt L_h)_{i,i} \geq \min \lambda( f_e(\dt L_h)) , \label{E-s2}
\end{equation}
where $\lambda(f_e(\dt L_h))$ presents the eigenvalues of $f_e(\dt L_h)$.
With Lemma \ref{Matrix function lemma} and periodic boundary condition, the eigenvalues of $\dt f_e(\dt L_h)$ can be calculated explicitly
\begin{equation}
	\lambda(\dt f_e(\dt L_h))=\left\{ \dt f_e \left( \frac{4\kappa \dt}{h^2}(\sin^2\frac{k\pi}{N}+\sin^2\frac{l\pi}{N})+\lambda \dt \right), \ k,l=1,\cdots,N \right\}. \label{E-s3}
\end{equation}
%{\color{blue} By the Lipschitz continuity of $f_e$ in $[0,+\infty)$, we have}
Noticing that $f_e$ is Lipschitz continuous in $[0,+\infty)$
\begin{equation}
	|f_e(x)-f_e(y)| \leq \frac{1}{2}|x-y|, \quad x,y \in [0,+\infty), \nonumber
\end{equation}
we have
\begin{align}
	\| \dt f_e(\dt L_h)-\dt I \|_\infty& = \dt \left( f_e(\lambda \dt)+f_e(0)-2f_e\left(\frac{8 \kappa \dt}{h^2}+\lambda \dt\right) \right) \nonumber \\
	& \leq \dt \left( \frac{8 \kappa \dt}{2h^2}+\frac{8 \kappa \dt}{2h^2}+\frac{\lambda \dt}{2} \right)  = \dt \left( \frac{8 \kappa \dt}{h^2}+\frac{\lambda \dt}{2} \right).   \label{E-s4} 
\end{align}
With combination of \eqref{E-s1}-\eqref{E-s4}, we obtain the estimation for $\|\dt f_e(\dt L_h)-\dt I\|_\infty$:
\begin{align}
\| \dt f_e(\dt L_h)-\dt I \|_\infty &= \frac{1-e^{-\lambda \dt}}{\lambda}-\dt (2\min_i f_e(\dt L_h)_{i,i}-1)  \nonumber \\
& \leq \frac{1-e^{-\lambda \dt}}{\lambda} +\dt -2\dt \min \lambda( f_e(\dt L_h)) \nonumber \\
& = \frac{1-e^{-\lambda \dt}}{\lambda} +\dt -2 \dt f_e\left(\frac{8 \kappa \dt}{h^2}+\lambda \dt\right) \nonumber \\
& = \dt \left( f_e(\lambda \dt)+f_e(0)-2f_e\left(\frac{8 \kappa \dt}{h^2}+\lambda \dt\right) \right) \nonumber \\
& \leq \dt \left( \frac{8 \kappa \dt}{2h^2}+\frac{8 \kappa \dt}{2h^2}+\frac{\lambda \dt}{2} \right)  \nonumber \\
& = \dt \left( \frac{8 \kappa \dt}{h^2}+\frac{\lambda \dt}{2} \right).   \label{E-sI} 
\end{align}
%Here $\lambda(f_e(\dt L_h))$ denotes the eigenvalues of $f_e(\dt L_h)$. In the first line of \eqref{E-sI}, the fact that matrix $\dt f_e(\dt L_h)-\dt I$ is symmetric and has negative diagonal elements and positive off-diagonal elements has been used, provided by the positivity of $f_e(\dt L_h)$ and $\|\dt f_e(\dt L_h)\|_\infty < \dt$ in the proof of Lemma \ref{infinite norm estimate lemma}. Furthermore, the positivity of $f_e(\dt L_h)$ means that the diagonal elements are located between the maximum and minimum eigenvalues. With Lemma \ref{Matrix function lemma} and periodic boundary condition, the eigenvalues of $\dt f_e(\dt L_h)$ can be calculated explicitly by
%\begin{equation}
%\lambda(\dt f_e(\dt L_h))=\left\{ \dt f_e \left( \frac{4\kappa \dt}{h^2}(\sin^2\frac{k\pi}{N}+\sin^2\frac{l\pi}{N})+\lambda \dt \right), \ k,l=1,\cdots,N \right\}, \nonumber
%\end{equation}
%which is indicated in the second and third line of \eqref{E-sI}. 
Finally, it follows from \eqref{Mn_infty} and \eqref{E-sI} that
\begin{equation}
\| \dt (f_e(\dt L_h)- I) M_h^n\|_\infty \leq \|M_h^n\|_\infty \| \dt f_e(\dt L_h)-\dt I \|_\infty \leq \frac{4 \kappa \dt}{h^2}+\frac{\lambda \dt}{4}. \label{half conclusion}
\end{equation}
A derivation of the right side of target \eqref{positivity target 2} leads to
\begin{align}
\frac{\alpha_{\min}}{\|\left(I-\dt M_h^n\right)^{-1}\|_\infty^2+\alpha_{\min}\|\left(I-\dt M_h^n\right)^{-1}\|_\infty } & \geq \frac{\alpha_{\min}\left( 1-\dt \|M_h^n\|_\infty \right)^2}{1+\alpha_{\min}\left( 1-\dt \|M_h^n\|_\infty \right)} \nonumber \\
& \geq \frac{\alpha_{\min}}{4+2\alpha_{\min}}, \label{approximation C_M}
\end{align}
where the inequality 
$$
\|\left(I-\dt M_h^n\right)^{-1}\|_\infty \leq \frac{1}{1-\dt \|M_h^n\|_\infty}
$$
is applied. A combination of \eqref{half conclusion} and \eqref{approximation C_M} gives the condition \eqref{final constraint}.
%We remark that lemma \ref{inverse operator continuity} has an additional condition $\|A^{-1}\| \|E\|<1$. Under the constraint \eqref{final constraint} and \eqref{CFL condition 1}, it is easily covered:
%\begin{equation}
%\| \dt( f_e(\dt L_h)-I)M_h^n \|_\infty \|\left(I-\dt M_h^n\right)^{-1}\|_\infty \leq \frac{\alpha_{\min}}{\|\left(I-\dt M_h^n\right)^{-1}\|_\infty+\alpha_{\min} } < 1. \nonumber
%\end{equation}
\end{proof}

\subsection{Energy dissipation}
Energy dissipation is an essential property in both physical modeling and numerical simulation. Following the discrete inner product in Definition 2.4, the electrostatic free energy $\mathcal{F}$ in \eqref{original energy} can be discretized by
\begin{equation}
\mathcal{F}_h^n = \sum_{\ell=1}^n \left( c^{\ell,n}\log(c^{\ell,n}) + \mu^{\ell,\mathrm{cr},n} , 1 \right) + \left( \frac{\kappa^2|\bD^n|^2}{\varepsilon}, 1 \right). \nonumber
\end{equation}
The theorem follows a similar idea proposed in \cite{Efficient_2021} and \cite{Qiao_NumericalMANP_2023}, with modification to the matrix functions and electric displacement.
\begin{theorem}\label{Energy dissipation theorem}
Let $\varepsilon_{\min}=\min_{i,j} \{ \varepsilon_{i+1/2,j},\varepsilon_{i,j+1/2} \}$, $\varepsilon_{\max}=\max_{i,j} \{ \varepsilon_{i+1/2,j},\varepsilon_{i,j+1/2} \}$ and $c_{\max}=\max_{i,j,\ell} \{ c^{\ell,n+1}_{i,j} \}$. Assume the conditions \eqref{CFL condition 1} and \eqref{final constraint} hold. If $\mu^{\ell,\mathrm{cr}}$ is independent in time and 
\begin{equation}
\dt+\frac{8\kappa }{\lambda h^2}\dt \leq \frac{2\varepsilon_{\min}^3\kappa}{c_{\max}\varepsilon_{\max}^2\sum_{\ell=1}^M |q^\ell|^2} \exp\left[ -\max_{i,j,\ell}\left( \left|dg^{\ell,n}_{i+\frac{1}{2},j}\right|,\left|dg^{\ell,n}_{i,j+\frac{1}{2}}\right| \right) \right], \label{Energy dissipation condition}
\end{equation} 
then the solution of numerical scheme \eqref{ETD1} satisfies the discrete energy dissipation
\begin{equation}
\mathcal{F}_h^{n+1}-\mathcal{F}_h^n \leq -\frac{\dt}{2} I_1, \label{discrete energy decay}
\end{equation}
where
\begin{align}
I_1=\sum_{\ell=1}^M \kappa \ciptwo{f_e(L_h\dt)\left[ e^{-g^{\ell,n}} \nabla_h (e^{g^{\ell,n}}c^{\ell,n+1}) \right]}{\nabla_h \log(e^{g^{\ell,n}}c^{\ell,n+1})} \geq 0 \nonumber.
\end{align}
\end{theorem}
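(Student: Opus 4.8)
The plan is to bound $\mathcal{F}_h^{n+1}-\mathcal{F}_h^n$ by convexity and then recognize the dominant contribution as $-\dt I_1$ through the scheme \eqref{ETD1}. I split the increment into its concentration and displacement parts. For the concentration part, convexity of $x\log x$ gives the one-sided bound $c^{\ell,n+1}\log c^{\ell,n+1}-c^{\ell,n}\log c^{\ell,n}\le(1+\log c^{\ell,n+1})(c^{\ell,n+1}-c^{\ell,n})$, in which the constant drops after pairing with $(\cdot,1)$ by mass conservation, while the frozen $\mu^{\ell,\mathrm{cr}}$ contributes $\sum_\ell(\mu^{\ell,\mathrm{cr}},c^{\ell,n+1}-c^{\ell,n})$. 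For the displacement part I use the exact identity $a^2-b^2=2b(a-b)+(a-b)^2$ together with the curl-free relation $\bD^{n}/\varepsilon=-\nabla_h\phi^n$, the discrete Gauss law \eqref{discrete Gauss's law}, and summation by parts. The decisive choice is to expand about the time-$n$ field, which produces exactly $g^{\ell,n}=q^\ell\phi^n+\mu^{\ell,\mathrm{cr}}$ and thus matches the operator $M_h^n$ of the scheme, leaving a nonnegative quadratic remainder:
\[
\mathcal{F}_h^{n+1}-\mathcal{F}_h^n\le\sum_{\ell=1}^M\left(\log\!\left(e^{g^{\ell,n}}c^{\ell,n+1}\right),c^{\ell,n+1}-c^{\ell,n}\right)+S,\qquad S:=\kappa^2\left(\frac{|\bD^{n+1}-\bD^n|^2}{\varepsilon},1\right).
\]

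Next I substitute the scheme. Writing $\boldsymbol{q}^\ell:=e^{-g^{\ell,n}}\nabla_h(e^{g^{\ell,n}}c^{\ell,n+1})$, one has $M_h^n c^{\ell,n+1}=\kappa\,\nabla_h\cdot\boldsymbol{q}^\ell$ and $c^{\ell,n+1}-c^{\ell,n}=\dt f_e(\dt L_h)M_h^n c^{\ell,n+1}$ from \eqref{ETD1} and \eqref{Matrix Mn}. Using the self-adjointness of $f_e(\dt L_h)$ (Lemma \ref{Matrix function lemma}(2)), its commutation with $\nabla_h$ on the periodic grid (both are Fourier-diagonal), and summation by parts, the matched sum collapses to exactly $-\dt I_1$. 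Here the pointwise identity $\nabla_h\log(e^{g^{\ell,n}}c^{\ell,n+1})=A^\ell\boldsymbol{q}^\ell$, with $A^\ell$ a positive face-diagonal factor (a logarithmic mean divided by the entropic mean of $e^{-g^{\ell,n}}$), makes the two vectors in $I_1$ pointwise parallel; combined with the entrywise positivity of $f_e(\dt L_h)$ from Lemma \ref{infinite norm estimate lemma} it also yields $I_1\ge 0$. At this stage the theorem is reduced to the single estimate $S\le\tfrac{\dt}{2}I_1$.

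The crux is to compare these two positive quantities. On one side I lower-bound $I_1$: the logarithmic-mean inequality $(w_{i+1}-w_i)(\log w_{i+1}-\log w_i)\ge|w_{i+1}-w_i|^2/\max(w_i,w_{i+1})$ applied to $w=e^{g^{\ell,n}}c^{\ell,n+1}$, together with the entropic-mean interpolation at the half points, gives $A^\ell\succeq a_0 I$ with $a_0=c_{\max}^{-1}\exp(-\max_{i,j,\ell}(|dg^{\ell,n}_{i+1/2,j}|,|dg^{\ell,n}_{i,j+1/2}|))$, hence a coercive lower bound for $I_1$ in the $f_e(\dt L_h)$-weighted flux energy. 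On the other side I bound $S$: since $\bD^{n+1}-\bD^n$ is curl-free and, by Gauss's law and the scheme, has divergence $\tfrac{\dt}{2\kappa}\nabla_h\cdot(f_e(\dt L_h)\sum_\ell q^\ell\boldsymbol{q}^\ell)$, it is the $\varepsilon$-weighted Helmholtz (curl-free) projection of $\tfrac{\dt}{2\kappa}f_e(\dt L_h)\sum_\ell q^\ell\boldsymbol{q}^\ell$; contractivity of that projection in the norm $\|\boldsymbol{v}\|_\varepsilon^2:=\left(|\boldsymbol{v}|^2/\varepsilon,1\right)$ gives $S\le\tfrac{\dt^2}{4}\|f_e(\dt L_h)\sum_\ell q^\ell\boldsymbol{q}^\ell\|_\varepsilon^2$. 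Passing between the weighted and unweighted norms supplies the factor $\varepsilon_{\min}^3/\varepsilon_{\max}^2$, Cauchy–Schwarz over species supplies $\sum_\ell|q^\ell|^2$, and matching the $f_e(\dt L_h)$-weightings across the spectrum of $L_h$ supplies the ratio $\lambda_{\max}(L_h)/\lambda_{\min}(L_h)=1+8\kappa/(\lambda h^2)$. Collecting the constants, the requirement $S\le\tfrac{\dt}{2}I_1$ becomes precisely the time-step condition \eqref{Energy dissipation condition}, which then yields \eqref{discrete energy decay}.

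I expect the comparison $S\le\tfrac{\dt}{2}I_1$ to be the main obstacle, because $f_e(\dt L_h)$ does not commute with the pointwise weights $A^\ell$ or with $\varepsilon$, so the scalar coercivity constant $a_0$ cannot simply be pulled through the operator. Controlling this non-commuting interaction — equivalently, estimating the $\varepsilon$-weighted projection and the $f_e$-weighted quadratic forms on a common footing — is exactly what forces the combination $\varepsilon_{\min}^3/\varepsilon_{\max}^2$, $c_{\max}$, $\exp(-\max|dg^{\ell,n}|)$ and the conditioning factor $1+8\kappa/(\lambda h^2)$ in \eqref{Energy dissipation condition}. The remaining ingredients (the convexity bounds and the summation-by-parts collapse to $-\dt I_1$) are routine once the time-$n$ potential is used to match $g^{\ell,n}$.
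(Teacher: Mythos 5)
Your proposal is correct and, at the structural level, it follows the paper's own argument: the same reduction $\mathcal{F}_h^{n+1}-\mathcal{F}_h^n \leq -\dt\, I_1 + S$ with $S=\dt^2 I_2=\kappa^2\ciptwo{|\bD^{n+1}-\bD^n|^2/\varepsilon}{1}$ (your convexity bound plus mass conservation is the paper's estimate $\ciptwo{c^{\ell,n}}{\log (c^{\ell,n+1}/c^{\ell,n})}\leq 0$ in disguise), the same collapse to $-\dt I_1$ by commuting $f_e(\dt L_h)$ with the discrete divergence and summing by parts, the same pointwise-parallel observation for $I_1\geq 0$, the same coercivity constant $a_0=c_{\max}^{-1}\exp(-\max_{i,j,\ell}|dg^{\ell,n}|)$ (you get it from the logarithmic-mean inequality, the paper from the mean value theorem applied to $\mu^{\ell,n,*}$ --- these are equivalent), and the same spectral bracketing $f_e\bigl((\lambda+8\kappa/h^2)\dt\bigr)\leq \lambda\bigl(f_e(\dt L_h)\bigr)\leq f_e(\lambda\dt)$ followed by the elementary bound that yields the factor $1+8\kappa/(\lambda h^2)$ in \eqref{Energy dissipation condition}. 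The one genuinely different ingredient is your treatment of the displacement increment: the paper tests the discrete Gauss law \eqref{discrete Gauss's law} with $\phi^{n+1}-\phi^n$ and sums by parts to obtain $\|\bD^{n+1}-\bD^n\|\leq \dt\,\varepsilon_{\max}f_e(\lambda\dt)(2\kappa^2\varepsilon_{\min})^{-1}\sum_\ell |q^\ell|\,\|\bJ^{\ell,n}\|$, whereas you observe that $(\bD^{n+1}-\bD^n)/\varepsilon$ is curl-free with divergence prescribed by the scheme \eqref{ETD1}, hence by uniqueness of the constrained minimizer of $\mathcal{F}_{pot}$ it is the $\varepsilon$-weighted curl-free projection of $\frac{\dt}{2\kappa}f_e(\dt L_h)\sum_\ell q^\ell \boldsymbol{q}^\ell$, and contractivity of this projection in the norm $\|\boldsymbol{v}\|_\varepsilon^2=\ciptwo{|\boldsymbol{v}|^2/\varepsilon}{1}$ gives $S\leq \frac{\dt^2}{4}\|f_e(\dt L_h)\sum_\ell q^\ell\boldsymbol{q}^\ell\|_\varepsilon^2$ directly. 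This alternative is attractive: it exploits exactly the variational structure that underlies the local curl-free relaxation of Section~\ref{local}, and it is in fact sharper than the paper's duality estimate (it saves a factor of order $(\varepsilon_{\max}/\varepsilon_{\min})^2$), so the stated condition \eqref{Energy dissipation condition} is more than sufficient on your route. One caveat, which you rightly single out as the crux: pulling the scalar weights $a_0$ and $\lambda_{\min}(f_e(\dt L_h))$ through the bilinear form when its two arguments differ by a nonconstant positive diagonal factor is precisely the step the paper performs via $\min_i f_e(\dt L_h)_{i,i}\geq \min\lambda(f_e(\dt L_h))$ together with entrywise positivity; the off-diagonal cross terms are not fully controlled there either, so on this point your sketch matches, but does not exceed, the level of rigor of the printed proof.
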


\begin{proof}
Based on the difference of energy between two adjacent time steps and the time-independent $\mu^{\ell,\mathrm{cr}}$, we have
\begin{align}
\mathcal{F}_h^{n+1}-\mathcal{F}_h^n = &\sum_{\ell=1}^M \left[ \ciptwo{c^{\ell,n+1}-c^{\ell,n}}{\log c^{\ell,n+1}+g^{\ell,n}-q^{\ell}\phi^n} + \ciptwo{c^{\ell,n}}{\log \frac{c^{\ell,n+1}}{c^{\ell,n}}} \right] \nonumber \\
&+ \ciptwo{|\bD^{n+1}|^2-|\bD^n|^2}{\frac{\kappa^2}{\varepsilon}}. \nonumber
\end{align}
The mass conservation leads to
\begin{align}
\ciptwo{c^{\ell,n}}{\log \frac{c^{\ell,n+1}}{c^{\ell,n}}} \leq \ciptwo{c^{\ell,n+1}-c^{\ell,n}}{1}=0. \nonumber
\end{align}
Then we have
\begin{align}
\mathcal{F}_h^{n+1}-\mathcal{F}_h^n \leq -\dt I_1+\dt^2 I_2, \nonumber
\end{align}
where 
\begin{equation}
I_1=-\sum_{\ell=1}^M \ciptwo{\frac{c^{\ell,n+1}-c^{\ell,n}}{\dt}}{\log c^{\ell,n+1}+g^{\ell,n}}, \nonumber
\end{equation}
\begin{equation}
I_2=\frac{1}{\dt^2}\ciptwo{|\bD^{n+1}|^2-|\bD^n|^2}{\frac{\kappa^2}{\varepsilon}} - \frac{1}{\dt^2}  \ciptwo{\sum_{\ell=1}^M q^{\ell}(c^{\ell,n+1}-c^{\ell,n}) }{\phi^n}.  \nonumber
\end{equation}
By the numerical scheme \eqref{ETD1} and summation by parts formula, we have
\[
I_1=-\sum_{\ell=1}^M \ciptwo{f_e(L_h\dt) \nabla_h \cdot \kappa \left[ e^{-g^{\ell,n}} \nabla_h (e^{g^{\ell,n}}c^{\ell,n+1}) \right]}{\log(e^{g^{\ell,n}}c^{\ell,n+1})} .
\]
 It's clear that $f_e(L_h\dt)$ can be expanded into an infinite series of $\Delta_h$, based on the definition of $L_h$. The interchangeability of discrete Laplacian and divergence operator is valid in central difference frame: for any $\boldsymbol{u}=(u^x, u^y)^T \in \mathcal{E}_{\Omega}$ with periodic boundary condition
\begin{equation}
\Delta_h (\nabla_h \cdot \boldsymbol{u}) = \nabla_h \cdot (\Delta_h \boldsymbol{u}), \quad \text{where  } \Delta_h \boldsymbol{u}=(\Delta_h u^x, \Delta_h u^y)^T. \nonumber
\end{equation}
Therefore, 
\begin{align}
	I_1
	=\sum_{\ell=1}^M \kappa \ciptwo{f_e(L_h\dt)\left[ e^{-g^{\ell,n}} \nabla_h (e^{g^{\ell,n}}c^{\ell,n+1}) \right]}{\nabla_h \log(e^{g^{\ell,n}}c^{\ell,n+1})}. \label{e proof 1}
\end{align}
 Furthermore, we have already proved that $f_e(L_h\dt)$ is a positive matrix, which leads to
\begin{equation}
I_1 \geq  \sum_{\ell=1}^M \kappa \min_i f_e(L_h\dt)_{i,i} \ciptwo{ e^{-g^{\ell,n}} \nabla_h (e^{g^{\ell,n}}c^{\ell,n+1}) }{\nabla_h \log(e^{g^{\ell,n}}c^{\ell,n+1})} \geq 0. \label{e proof 2}
\end{equation}
The last inequality of \eqref{e proof 2} comes from the monotonicity of exponential function that
\begin{align}
\ciptwo{ e^{-g^{\ell,n}} \nabla_h (e^{g^{\ell,n}}c^{\ell,n+1}) }{\nabla_h \log(e^{g^{\ell,n}}c^{\ell,n+1})}
  &=\sum_{i,j} e^{-g^{\ell,n}_{i+1/2,j}}(e^{\mu^{\ell,n,*}_{i+1,j}}- e^{\mu^{\ell,n,*}_{i,j}})(\mu^{\ell,n,*}_{i+1,j}-\mu^{\ell,n,*}_{i,j}) \nonumber \\
+\sum_{i,j} e^{-g^{\ell,n}_{i,j+1/2}}&(e^{\mu^{\ell,n,*}_{i,j+1}}- e^{\mu^{\ell,n,*}_{i,j}})(\mu^{\ell,n,*}_{i,j+1}-\mu^{\ell,n,*}_{i,j}), \geq 0, \nonumber
\end{align}
where $\mu^{\ell,n,*}=\log(e^{g^{\ell,n}}c^{\ell,n+1})$.
It follows from the discrete Gauss's law \eqref{discrete Gauss's law} that
\begin{align}
I_2 &= \frac{1}{\dt^2}\ciptwo{|\bD^{n+1}|^2-|\bD^n|^2}{\frac{\kappa^2}{\varepsilon}} - \frac{2\kappa^2}{\dt^2} \ciptwo{\nabla_h \cdot (\bD^{n+1}-\bD^{n})}{\phi^n} \nonumber \\
&=\frac{1}{\dt^2}\ciptwo{|\bD^{n+1}|^2-|\bD^n|^2}{\frac{\kappa^2}{\varepsilon}} - \frac{2\kappa^2}{\dt^2} \ciptwo{\bD^{n+1}-\bD^{n}}{\frac{1}{\varepsilon} \bD^n} \nonumber \\
&=\ciptwo{|\bD^{n+1}-\bD^{n}|^2}{\frac{\kappa^2}{\dt^2\varepsilon}} \geq 0. \nonumber
\end{align}

The above analysis shows the energy dissipation \eqref{discrete energy decay} if 
\begin{equation}
\dt \leq \frac{I_1}{2I_2}. \nonumber
\end{equation}
Now we estimate the upper bound of $I_2$. The discrete Gauss's law shows that
\begin{equation}
2\kappa^2 \ciptwo{\nabla_h \cdot (\bD^{n+1}-\bD^{n})}{\phi^{n+1}-\phi^n} = \sum_{\ell=1}^M q^\ell \ciptwo{c^{\ell,n+1}-c^{\ell,n}}{\phi^{n+1}-\phi^n}, \nonumber
\end{equation}
where $\phi^n$ is the discrete electric potential. It's from the summation by parts that
\begin{equation}
LHS = 2\kappa^2 \ciptwo{\bD^{n+1}-\bD^{n}}{\frac{1}{\varepsilon}(\bD^{n+1}-\bD^{n})} \geq \frac{2\kappa^2}{\varepsilon_{\max}}\|\bD^{n+1}-\bD^{n}\|^2 \label{e proof 4}
\end{equation}
and
\begin{align}
RHS &= -\sum_{\ell=1}^M q^\ell \ciptwo{\dt f_e(L_h\dt) \bJ^{\ell,n} }{\frac{1}{\varepsilon}(\bD^{n+1}-\bD^{n})} \nonumber \\
&\leq \frac{\dt \lambda_{\max}(f_e(L_h\dt))}{\varepsilon_{\min}} \|\bD^{n+1}-\bD^{n}\| \sum_{\ell=1}^M |q^\ell| \|\bJ^{\ell,n}\| \nonumber \\
& = \frac{\dt f_e(\lambda\dt)}{\varepsilon_{\min}} \|\bD^{n+1}-\bD^{n}\| \sum_{\ell=1}^M |q^\ell| \|\bJ^{\ell,n}\|.  \label{e proof 5}
\end{align}
A combination of \eqref{e proof 4} and \eqref{e proof 5} shows that
\begin{equation}
\|\bD^{n+1}-\bD^{n}\| \leq \frac{ \dt \varepsilon_{\max} f_e(\lambda\dt)}{2\kappa^2\varepsilon_{\min}} \sum_{\ell=1}^M |q^\ell|\|\bJ^{\ell,n}\|\nonumber
\end{equation}
and
\begin{align}
I_2 &\leq \frac{\kappa^2}{\dt^2 \varepsilon_{\min}} \|\bD^{n+1}-\bD^{n}\|^2 \nonumber \\
& \leq \frac{\varepsilon_{\max}^2 f_e^2(\lambda\dt)\sum_{\ell=1}^M |q^\ell|^2}{4\varepsilon_{\min}^3\kappa^2} \sum_{\ell=1}^M \|\bJ^{\ell,n}\|^2. \label{e proof 6}
\end{align}
Combining \eqref{e proof 2} and \eqref{e proof 6}, we have
\begin{align}
\frac{I_1}{2I_2} & \geq \frac{2\varepsilon_{\min}^3\kappa^2 f_e(\lambda\dt + \frac{8\kappa \dt}{h^2}) }{\varepsilon_{\max}^2 f_e^2(\lambda\dt)\sum_{\ell=1}^M |q^\ell|^2}  \cdot
   \frac{\sum_{\ell=1}^M \ciptwo{ -\bJ^{\ell,n} }{\nabla_h \mu^{\ell,n,*}}}{\sum_{\ell=1}^M \|\bJ^{\ell,n}\|^2} \nonumber \\
& \geq C \min_{i,j,\ell} \left\{\frac{\mu_{i+1,j}^{\ell,n,*}-\mu_{i,j}^{\ell,n,*}}{-h \bJ_{i+\frac{1}{2}, j}^{\ell, n}}, \frac{\mu_{i,j+1}^{\ell,n,*}-\mu_{i,j}^{\ell,n,*}}{-h\bJ_{i,j+\frac{1}{2}}^{\ell,n}}\right\},  \nonumber
\end{align}
where
\begin{equation}
C=\frac{2\varepsilon_{\min}^3\kappa^2 f_e(\lambda\dt + \frac{8\kappa \dt}{h^2}) }{\varepsilon_{\max}^2 f_e^2(\lambda\dt)\sum_{\ell=1}^M |q^\ell|^2}. \nonumber
\end{equation}
Furthermore, based on the mean value theorem and the entropic mean \eqref{entropic mean}
\begin{align}
\frac{\mu_{i+1,j}^{\ell,n,*}-\mu_{i,j}^{\ell,n,*}}{-h J_{i+\frac{1}{2}, j}^{\ell, n}} &= \frac{\mu_{i+1,j}^{\ell,n,*}-\mu_{i,j}^{\ell,n,*}}{\kappa(e^{\mu_{i+1,j}^{\ell,n,*}}-e^{\mu_{i,j}^{\ell,n,*}})} \cdot \frac{e^{g^{\ell,n}_{i+1,j}}-e^{g^{\ell,n}_{i,j}}}{g^{\ell,n}_{i+1,j}-g^{\ell,n}_{i,j}} \nonumber \\
&=\frac{1}{\kappa e^{\theta \mu_{i+1,j}^{\ell,n,*} +(1-\theta)\mu_{i,j}^{\ell,n,*}}} \cdot \frac{e^{g^{\ell,n}_{i+1,j}}-e^{g^{\ell,n}_{i,j}}}{g^{\ell,n}_{i+1,j}-g^{\ell,n}_{i,j}} \nonumber \\
&=\frac{e^{-\theta g^{\ell,n}_{i+1,j}+(\theta-1) g^{\ell,n}_{i,j}}}{\kappa(c^{\ell,n+1}_{i+1,j})^\theta (c^{\ell,n+1}_{i,j})^{1-\theta} } \cdot \frac{e^{g^{\ell,n}_{i+1,j}}-e^{g^{\ell,n}_{i,j}}}{g^{\ell,n}_{i+1,j}-g^{\ell,n}_{i,j}} \nonumber \\
&\geq \frac{e^{-|dg^{\ell,n}_{i+1/2,j}|}}{\kappa c_{\max}}.  \nonumber
\end{align}
Similarly,
\begin{equation}
\frac{\mu_{i,j+1}^{\ell,n,*}-\mu_{i,j}^{\ell,n,*}}{-h J_{i, j+\frac{1}{2}}^{\ell, n}} \geq \frac{e^{-|dg^{\ell,n}_{i,j+1/2}|}}{\kappa c_{\max}}.   \nonumber
\end{equation}
Finally we have 
\begin{equation}
\frac{I_1}{2I_2} \geq \frac{C}{\kappa c_{\max}} \exp\left[ -\max_{i,j,\ell}\left( |dg^{\ell,n}_{i+1/2,j}|,|dg^{\ell,n}_{i,j+1/2}| \right) \right].\nonumber
\end{equation}
Therefore, a choice is that
\begin{equation}
\dt \leq \frac{2\varepsilon_{\min}^3\kappa f_e(\lambda\dt + \frac{8\kappa \dt}{h^2}) }{c_{\max}\varepsilon_{\max}^2 f_e^2(\lambda\dt)\sum_{\ell=1}^M |q^\ell|^2} \exp\left[ -\max_{i,j,\ell}\left( |dg^{\ell,n}_{i+1/2,j}|,|dg^{\ell,n}_{i,j+1/2}| \right) \right],\nonumber
\end{equation}
equivalently,
\begin{equation}
\frac{\dt f_e^2(\lambda\dt) }{f_e(\lambda\dt + \frac{8\kappa \dt}{h^2})} \leq \frac{2\varepsilon_{\min}^3\kappa}{c_{\max}\varepsilon_{\max}^2\sum_{\ell=1}^M |q^\ell|^2} \exp\left[ -\max_{i,j,\ell}\left( |dg^{\ell,n}_{i+1/2,j}|,|dg^{\ell,n}_{i,j+1/2}| \right) \right]. \nonumber
\end{equation}
To make it clearer, we approximate the upper bound of the left hand:
\begin{align}
\frac{\dt f_e^2(\lambda\dt) }{f_e(\lambda\dt + \frac{8\kappa \dt}{h^2})}& \leq \frac{\dt f_e(\lambda\dt) }{f_e(\lambda\dt + \frac{8\kappa \dt}{h^2})}  \nonumber \\
& = \left( \dt+\frac{8\kappa \dt}{\lambda h^2} \right) \frac{1-e^{-\lambda \dt}}{1-e^{-(\lambda \dt+\frac{8\kappa \dt}{h^2})}}  \nonumber \\
& \leq \dt+\frac{8\kappa \dt}{\lambda h^2}, \nonumber
\end{align}
where the bound $0<f_e(x)<1$, $x>0$ is applied. As a result, the energy dissipation is satisfied if
\begin{equation}
\dt+\frac{8\kappa \dt}{\lambda h^2} \leq \frac{2\varepsilon_{\min}^3\kappa}{c_{\max}\varepsilon_{\max}^2\sum_{\ell=1}^M |q^\ell|^2} \exp\left[ -\max_{i,j,\ell}\left( |dg^{\ell,n}_{i+1/2,j}|,|dg^{\ell,n}_{i,j+1/2}| \right) \right],\nonumber
\end{equation}
which completes the proof.
\end{proof}

\begin{remark}
    As a brief summary, the conditions of the Picard iteration and positivity are determined in the form of $\dt \leq O(h^2)$ by the matrix analysis. The coefficient can be relaxed by the stabilizer $\lambda$. However, we have not yet proved the unconditional positivity-preservation through stable terms and we may leave this to future work. If there is a more accurate estimate for the exponential matrix, the conditions may be relaxed. 
    %Besides, the condition of the energy dissipation originates from decoupling the Nernst-Planck equations and Maxwell-Amp\`ere equation. It is unavoidable in decoupling methods.
\end{remark}

\section{Numerical test} \label{section numerical test}
To assess the proposed numerical method in Algorithm \ref{algorithm:MANP}, simulations are conducted to verify the expected convergence order, positivity, energy dissipation and mass conservation at discrete level.
\subsection{Convergence rate}
%Theoretically, we expect the global error to be~$\mathcal{O}(\Delta t+h^2)$~under~$\ell^2$~norm. 
In this example, we aim to test the convergence rate at the final time $T=0.1$ with a variable dielectric coefficient. The computational domain is set as $\Omega=[0,1]^2$. Since the exact solution is not explicit, we compute the Cauchy difference which is defined as
\begin{equation}
\delta_{\phi}:=\phi_{h_c}-\mathcal{I}_c^f\left(\phi_{h_f}\right), \nonumber
\end{equation}
where $h_f$ denotes the numerical solution on the fine mesh and $h_c$ is one on the coarse mesh. Here we set $h_c=2h_f$. $\mathcal{I}_c^f$ is a bilinear projection mapping the solution from fine grid to coarse grid. A smooth trigonometric initial date is taken as
\begin{equation}
c^1(t=0)=c^2(t=0)=0.4\sin (\pi x)\sin (\pi y)+0.5 \nonumber
\end{equation}
and relevant parameters are chosen as~$\Delta t=0.1h^2, \kappa=0.02$. The dielectric coefficient $\varepsilon$ is given as
\begin{equation}
\varepsilon=0.1\cos (\pi x)\cos (\pi y)+0.2.
\end{equation}
The $\ell^\infty$ and $\ell^2$ Cauchy differences and convergence orders of $c^1$ are presented in Table \ref{Convergence c1}. It is evident that as the mesh refines, the estimated orders gradually converge towards the theoretical value.

\begin{table}[h]
\center
\begin{tabular}{lllll}
\hline
                     & 16-32      & 32-64      & 64-128     & 128-256    \\ \hline
$\ell^\infty$ error  & 3.7068E-03 & 9.5586E-04 & 2.4085E-04 & 6.0330E-05 \\ 
$\ell^\infty$ order  &  -         & 1.9553     & 1.9887     & 1.9972     \\ 
$\ell^2$ error       & 1.9267E-04 & 4.8257E-04 & 1.2071E-04 & 3.0183E-05 \\ 
$\ell^2$ order       & -          & 1.9973     & 1.9992     & 1.9997     \\ \hline
\end{tabular} 
\caption{Convergence rates and Cauchy differences of $c^1$ in norm of both $\ell^2$ and $\ell^\infty$}\label{Convergence c1}
\end{table}

%\begin{table}[h]
%\center
%\begin{tabular}{lllll}
%\hline
%                     & 16-32      & 32-64      & 64-128     & 128-256    \\ \hline
%$\ell^\infty$ error  & 1.9236E-03 & 5.4006E-04 & 1.3948E-04 & 3.5186E-05 \\ 
%$\ell^\infty$ order  &  -         & 1.8326     & 1.9531     & 1.9870     \\ 
%$\ell^2$ error       & 5.4944E-04 & 1.2061E-04 & 2.8969E-05 & 7.1711E-06 \\ 
%$\ell^2$ order       &  -         & 2.1876     & 2.0578     & 2.0142     \\ \hline
%\end{tabular} 
%\caption{Convergence rates and Cauchy differences of $c^2$ in norm of both $\ell^2$ and $\ell^\infty$}\label{Convergence c2}
%\end{table}

In addition, we test the convergence order using the modified equations with given exact solution. It is explicitly defined by adding source terms. The MANP-type equations are given as
\begin{align}
& \partial_t c^{\ell}=-\nabla \cdot \boldsymbol{J}^{\ell}, \ell=1,2, \nonumber \\
& \boldsymbol{J}^{\ell}=-\kappa\left(\nabla c^{\ell}-q^{\ell} c^{\ell} \boldsymbol{D} / \varepsilon+\boldsymbol{g}^{\ell}\right), \ell=1,2,  \nonumber \\
& \partial_t \boldsymbol{D}=\left(-\boldsymbol{J}^1+\boldsymbol{J}^2+\boldsymbol{S}\right) /\left(2 \kappa^2\right)+\boldsymbol{\Theta}, \nonumber \\
& \nabla \times \boldsymbol{D} / \varepsilon=0, \nonumber \\
& \nabla \cdot \boldsymbol{\Theta}=0, \nonumber
\end{align}
with the constant dielectric coefficient $\varepsilon=0.5$, valences $q^\ell=(-1)^{\ell+1}$, and $\kappa=1$. The source terms $\boldsymbol{g}^{\ell}$ and $\boldsymbol{S}$, as well as initial conditions, are determined by the following exact solution
\begin{align}
& c^{\ell}(x, y, t)=\pi^2 e^{-t} \cos (\pi x) \cos (\pi y) / 5+2, \ell=1,2,   \nonumber\\
& \boldsymbol{D}(x, y,t)=\left(\begin{array}{l}
\pi e^{-t} \sin (\pi x) \cos (\pi y) / 2 \\
\pi e^{-t} \cos (\pi x) \sin (\pi y) / 2
\end{array}\right). \nonumber
\end{align}
%Among them, the divergence-free $\boldsymbol{\Theta}$ is arbitrary, which leads to the non-uniqueness of $\boldsymbol{g}^{\ell}$ and $\boldsymbol{S}$. Here, we choose the same setting as the reference\cite{Qiao_NumericalMANP_2023}.
%\begin{align}
%& \boldsymbol{g}^1=\left(\begin{array}{c}
%\left(\frac{2 \pi^3}{5}+4 \pi-\frac{\pi}{5}\right) e^{-t} \sin (\pi x) \cos (\pi y)+\frac{\pi^3}{10} e^{-2 t} \sin (2 \pi x) \cos ^2(\pi y) \\
%\frac{\pi^3}{10} e^{-2 t} \cos ^2(\pi x) \sin (2 \pi y)
%\end{array}\right),  \nonumber \\
%& \boldsymbol{g}^2=\left(\begin{array}{c}
%\left(\frac{2 \pi^3}{5}-4 \pi-\frac{\pi}{5}\right) e^{-t} \sin (\pi x) \cos (\pi y)-\frac{\pi^3}{10} e^{-2 t} \sin (2 \pi x) \cos ^2(\pi y) \\
%-\frac{\pi^3}{10} e^{-2 t} \cos ^2(\pi x) \sin (2 \pi y)
%\end{array}\right),  \nonumber \\
%& \boldsymbol{S}=\left(\begin{array}{c}
%-2 \pi e^{-t} \sin (\pi x) \cos (\pi y) \\
%0
%\end{array}\right) .\nonumber
%\end{align}
We take time step $\dt=0.1h^2$ and final time $T=0.1$. The result from the proposed method is shown in Table \ref{Convergence JCP 1}. This method also achieves theoretical convergence of first order in time and second order in space, further verifying the accuracy.

%The settings and parameters utilized in this study are adopted from reference \cite{Qiao_NumericalMANP_2023}. The primary objective is to compare the absolute error between the ETD scheme and the semi-implicit scheme. Although both schemes belong to the same order, the numerical simulations conducted within the ETD framework are expected to exhibit higher accuracy due to the partially exact integration. The results obtained from scheme \eqref{ETD1} with curl-free algorithm and the semi-implicit scheme are presented in Table \ref{Convergence JCP 1} and Table \ref{Convergence JCP 2}, respectively. Upon comparing these results with the reference data, it can be observed that both methods exhibit first order convergence in time and second-order convergence in space. Notably, the ETD scheme achieves comparable error levels even with a coarser spatial grid.

\begin{table}[h]
\center
\begin{tabular}{lllll}
\hline
   h   & $c^1$      & order   & $c^2$      & order       \\ \hline
 2/16  & 5.4137E-03 &  -      & 6.9079E-03 &  -          \\ 
 2/32  & 1.5763E-03 & 1.7800  & 2.0483E-03 & 1.7538      \\ 
 2/64  & 4.1024E-04 & 1.9420  & 5.6475E-04 & 1.8587      \\ 
 2/128 & 1.0830E-04 & 1.9214  & 1.4166E-04 & 1.9952      \\ \hline
\end{tabular} 
\caption{Convergence orders and errors of ETD scheme \eqref{ETD1} and \eqref{D* equation} in norm of $\ell^\infty$}
\label{Convergence JCP 1}
\end{table}
%\begin{table}[h]
%\center
%\begin{tabular}{lllll}
%\hline
%  h   & $c^1$      & order   & $c^2$      & order       \\ \hline
%2/16  & 5.3037E-03 &  -      & 6.7830E-03 &  -          \\ 
%2/32  & 1.5225E-03 & 1.8006  & 1.9844E-03 & 1.7732      \\ 
%2/64  & 4.3349E-04 & 1.8124  & 5.4547E-04 & 1.8605      \\ 
%2/128 & 1.0844E-04 & 1.9991  & 1.4233E-04 & 1.9409      \\ \hline
%\end{tabular} 
%\caption{Convergence rates and error of semi-implicit scheme\cite{Qiao_NumericalMANP_2023} in norm of $\ell^\infty$}
%\label{Convergence JCP 2}
%\end{table}

\subsection{Structure-preserving properties}\label{example 1}
In this part, a series of numerical tests are performed to verify the structure-preserving properties, including mass conservation, positivity and energy dissipation. The excess chemical potential $\mu^{\ell, \mathrm{cr}}$ is defined in \eqref{excess potential} and parameters are set as $\kappa=0.02$, $v^1=0.716^3$, $v^2=0.676^3$, $v^0=0.275^3$, $q^\ell=(-1)^{\ell+1}$ and $\chi=198.9437$. The stabilizer is chosen as $\lambda=2$. The space-dependent relative dielectric coefficient $\varepsilon(x)$ is determined by $\varepsilon_w$ and $\varepsilon_m$
\begin{equation}
\varepsilon(x, y)=\frac{\varepsilon_w-\varepsilon_m}{2}\left[\tanh \left(50 \sqrt{x^2+y^2}-25\right)+1\right]+\varepsilon_m. \nonumber
\end{equation}
The initial data is set as $c^{\ell,0}=0.1$ and the fixed charge distribution is given in polar coordinate
\begin{equation}
\rho^f=\mathbf{1}_{\left\{0.24 \leq r^2 \leq 0.26,0<\theta \leq \pi\right\}}-\mathbf{1}_{\left\{0.24 \leq r^2 \leq 0.26, \pi<\theta \leq 2 \pi\right\}}. \nonumber
\end{equation}
This configuration implies that the positive and negative charges of equal amount are uniformly distributed in the upper and lower sides, respectively, of a circular ring with a certain thickness. It is commonly referred to a Janus sphere. The computational domain is $[-1, 1]^2$, time step $\dt=10^{-4}$ and space step $h=2/128$. The following three examples are presented with different relative dielectric coefficients and parameters $\kappa$. All the parameters in the simulations are the same as those in the reference \cite{Qiao_NumericalMANP_2023} to test the reliability of the numerical methods.

\begin{example}\label{Exam 1}
A uniform dielectric coefficient is determined by $\varepsilon_w=\varepsilon_m=1$ and $\kappa=0.02$.
\end{example}
\begin{figure}[t!]
	\center
	\includegraphics[scale=0.75]{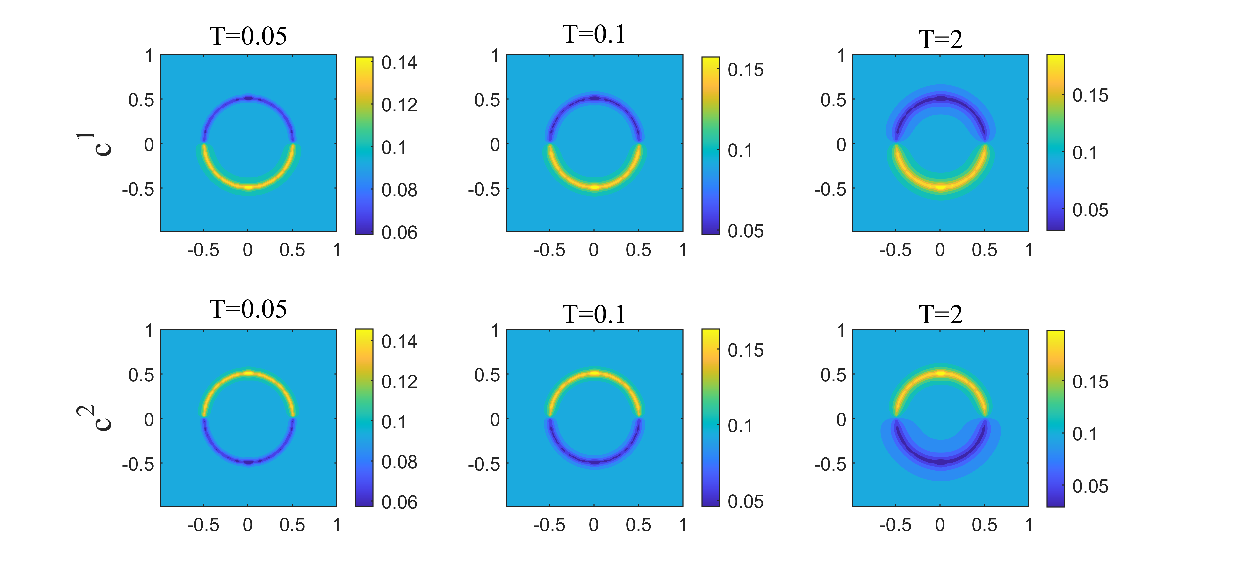}
	\caption{Slices of $c^1$ and $c^2$ at time $T=0.05,~ 0.1,~ 2$ in Example \ref{Exam 1}.} \label{ETD1_1_c1_c2}
\end{figure}
In the first example, a uniform dielectric coefficient is set for the numerical simulation where the Born solvation energy barrier does not have an impact on the results. Figure \ref{ETD1_1_c1_c2} shows the snapshots of $c^1$ and $c^2$ at selected times respectively. In the numerical simulations, it can be observed that under the influence of electrostatic interaction, ions with opposite signs gather at the locations of fixed charges, forming a narrow ring-like structure. The relatively small value of parameter $\kappa$ facilitates the formation of such structures.

%Figure \ref{ETD1_1_energy} shows the evolution of mass difference, energy and the minimum of $c^0$ which is defined as 
%$$c^0=\left(1-\sum_{\ell=1}^M v^{\ell} c^{\ell}\right) / v^0.$$ 
%The obtained results satisfy the positivity conditions of both $c^\ell$ and $c^0$, ensuring mass conservation and energy stability.

%\begin{figure}[h]
%\center
%\subfigure[t=0.05]{
%\includegraphics[height=3.7cm,width=4.7cm]{}   
%}
%\subfigure[t=0.1]{
%\includegraphics[height=3.7cm,width=4.7cm]{}   
%}
%\subfigure[t=2]{
%\includegraphics[height=3.7cm,width=4.7cm]{}   
%}
%\caption{Slices of $c^1$ at selected times in example 1.} \label{ETD1_1_c1}
%\end{figure}

%\begin{figure}[h]
%\center
%\subfigure[t=0.05]{
%\includegraphics[height=3.7cm,width=4.7cm]{}   
%}
%\subfigure[t=0.1]{
%\includegraphics[height=3.7cm,width=4.7cm]{}   
%}
%\subfigure[t=2]{
%\includegraphics[height=3.7cm,width=4.7cm]{}   
%}
%\caption{Slices of $c^2$ at selected times in example 1.} \label{ETD1_1_c2}
%\end{figure}

%\begin{figure}[ht]
%\center
%\subfigure[Mass difference]{
%\includegraphics[height=4cm,width=5cm]{}   
%}
%\subfigure[Energy]{
%\includegraphics[height=4cm,width=5cm]{}   
%}
%\subfigure[Minimum of $c^0$]{
%\includegraphics[height=4cm,width=5cm]{}   
%}
%\caption{Evolution of the mass difference, energy and the minimum of $c^0$ in Example \ref{Exam 1}.}\label{ETD1_1_energy}
%\end{figure}

\begin{example}\label{Exam 2}
A variable dielectric coefficient is determined by $\varepsilon_w=78$, $\varepsilon_m=1$ and $\kappa=0.02$.
\end{example}
In the second example, initial data and parameters are the same as Example \ref{Exam 1} except the variable dielectric coefficient. From Figure \ref{ETD1_78_c1_c2}, the electrostatic interaction generated by the fixed charges is greatly weakened compared to the previous Example \ref{Exam 1}. In this situation, the Born solvation energy barrier presents the dominance effect, which is caused by the significant difference in the relative dielectric coefficient. Under the driving force of convection, all free ions rapidly move from regions of low permittivity to regions of high permittivity. This movement leads to the ionic accumulation at the boundary separating two regions and then gradually spreads throughout the high dielectric coefficient region. 

Furthermore, the iteration steps of Picard iteration and curl-free relaxation in each time step are shown in Figure \ref{ETD1_78_Piccard}. 
%The results show agreement with \cite{Qiao_NumericalMANP_2023}. 
In the initial steps, these two algorithms both require relatively more iterations to achieve convergence. After that, the solution can be efficiently obtained within very few steps, showing the efficiency of our proposed method. 
%Besides, the ETD method is more accurate since the exact integration. These are the advantages of our new approach.}

%Although the energy in Figure \ref{ETD1_78_energy_energy} shows the evolution needs more time iteration to be stable, the tendency of simulations go well in positivity, mass conservation and energy stability.

%\begin{figure}[ht]
%\center
%\subfigure[t=0.05]{
%\includegraphics[height=2.5cm,width=3.5cm]{}   
%}
%\subfigure[t=0.1]{
%\includegraphics[height=2.5cm,width=3.5cm]{}   
%}
%\subfigure[t=0.4]{
%\includegraphics[height=2.5cm,width=3.5cm]{}   
%}
%\subfigure[t=1]{
%\includegraphics[height=2.5cm,width=3.5cm]{}   
%}
%\caption{Slices of $c^1$ at selected times in example 2.} \label{ETD1_78_c1}
%\end{figure}
%
%\begin{figure}[h]
%\center
%\subfigure[t=0.05]{
%\includegraphics[height=2.5cm,width=3.5cm]{}   
%}
%\subfigure[t=0.1]{
%\includegraphics[height=2.5cm,width=3.5cm]{}   
%}
%\subfigure[t=0.4]{
%\includegraphics[height=2.5cm,width=3.5cm]{}   
%}
%\subfigure[t=1]{
%\includegraphics[height=2.5cm,width=3.5cm]{}   
%}
%\caption{Slices of $c^2$ at selected times in example 2.} \label{ETD1_78_c2}
%\end{figure}

\begin{figure}[htp]
\center
\includegraphics[scale=0.75]{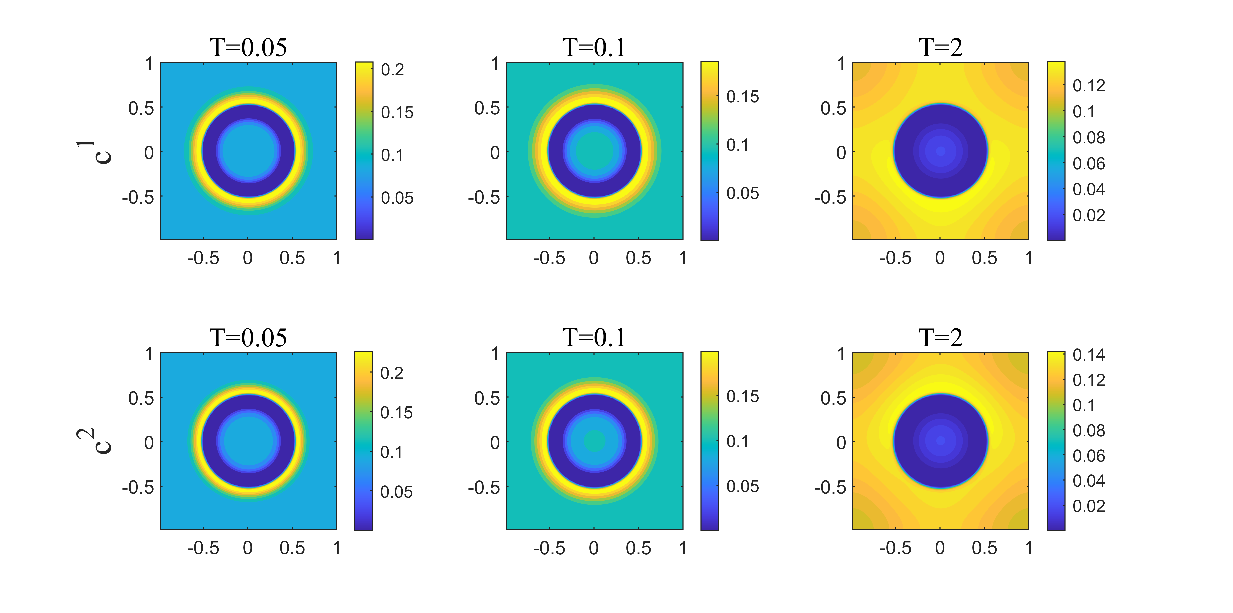}
\caption{Slices of $c^1$ and $c^2$ at time $T=0.05, ~0.1,~ 2$ in Example \ref{Exam 2}.} \label{ETD1_78_c1_c2}
\end{figure}

\begin{figure}[htp]
	\center
	\includegraphics[scale=0.45]{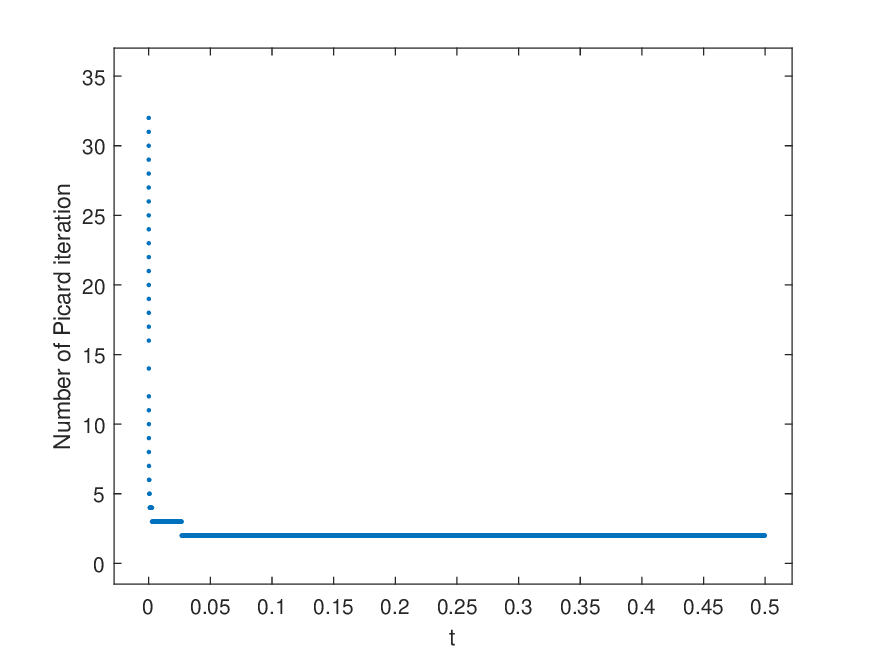}
	\includegraphics[scale=0.45]{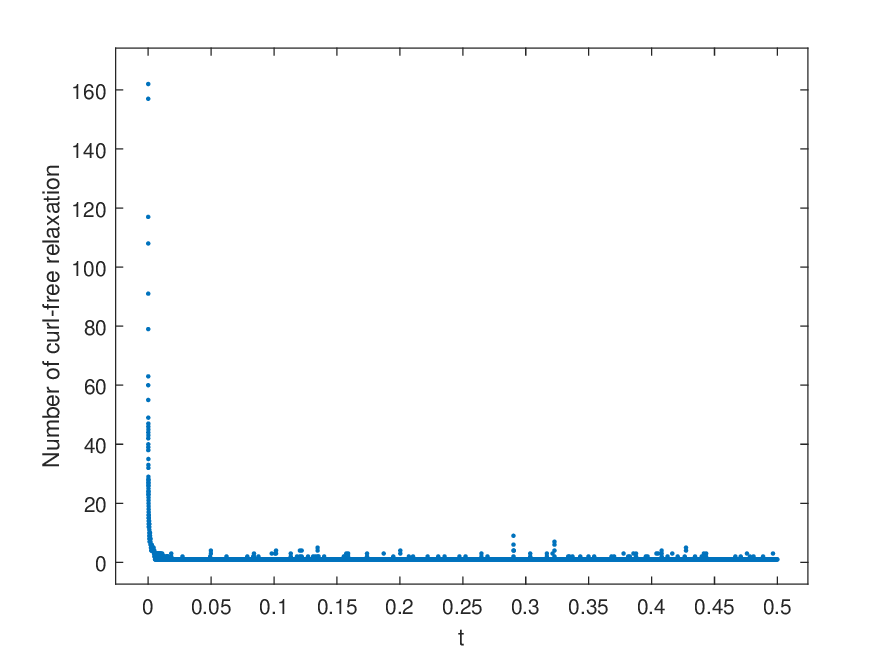}
	\caption{The number of Picard iteration and curl-free relaxation in Example \ref{Exam 2}.} 
\end{figure}

%\begin{figure}[h]
%\center
%\subfigure[Mass difference]{
%\includegraphics[height=4cm,width=5cm]{}   
%}
%\subfigure[Energy]{
%\includegraphics[height=4cm,width=5cm]{}   \label{ETD1_78_energy_energy}
%}
%\subfigure[Minimum of $c^0$]{
%\includegraphics[height=4cm,width=5cm]{}   
%}
%\caption{Evolution of the mass difference, energy and the minimum of $c^0$ in Example \ref{Exam 2}.}\label{ETD1_78_energy}
%\end{figure}

\begin{example}\label{Exam 3}
Variable dielectric coefficient is defined by $\varepsilon_w=78$, $\varepsilon_m=1$ and $\kappa=0.01$.
\end{example}
In the third example, to study the effect of Debye length, we calculate the dynamic under a smaller $\kappa=0.01$. As shown in Figure \ref{ETD1_78_c1_c2_k01}, both cations and anions once again gather at the interface and then diffuse, indicating that Born solvation interactions continue to play a dominant role. Additionally, the smaller $\kappa$ enhances convection and leads to narrower boundary layers. Compared Figure \ref{ETD1_78_c1_c2} with Figure \ref{ETD1_78_c1_c2_k01}, the different dynamics demonstrate that the electrostatic interaction becomes more pronounced with smaller $\kappa$. To summarize briefly, these numerical experiments show that our proposed algorithm is consistent with theoretical expectations in terms of accuracy and physical structure. It exhibits the same phenomena as described in the reference, and demonstrates good stability and reliability among various situations.

\begin{figure}[htp]
\center
\includegraphics[scale=0.75]{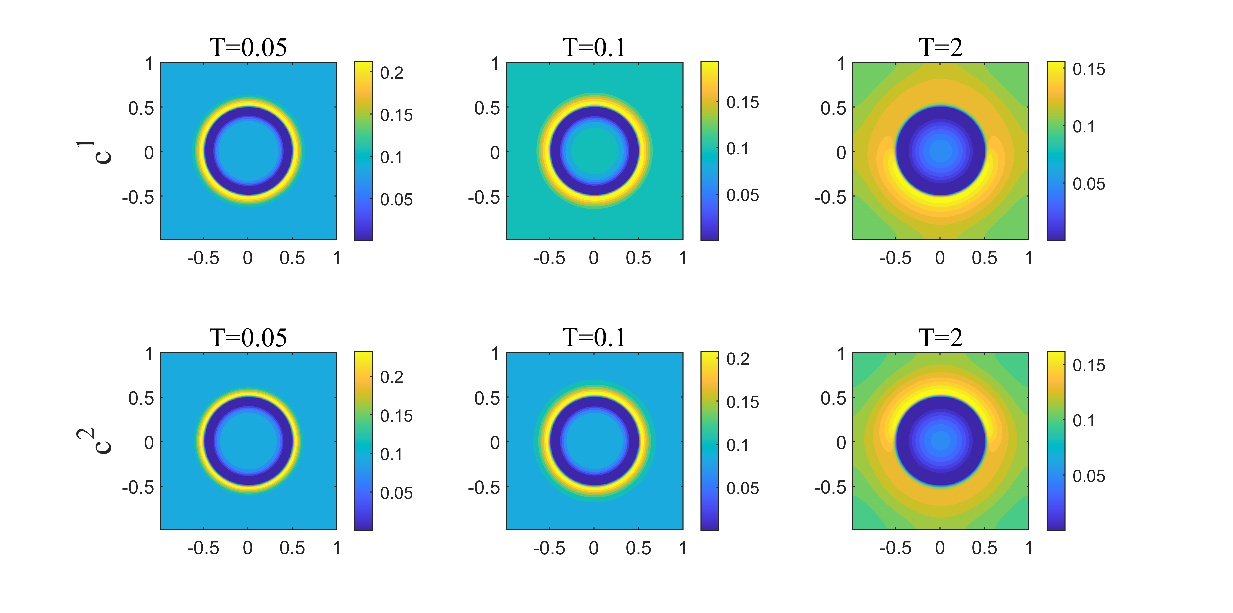}
\caption{Slices of $c^1$ and $c^2$ at time $T=0.05, ~0.1, ~2$ in Example \ref{Exam 3}. $\kappa=0.01$,$\varepsilon_w=78$ and $\varepsilon_m=1$.} \label{ETD1_78_c1_c2_k01}
\end{figure}

%\begin{figure}[h]
%\center
%\includegraphics[scale=0.6]{}
%\caption{Slice of $c^2$ at time $T=2$ in Example \ref{Exam 3}. $\kappa=0.01, 0.02$.}\label{compare_k01}
%\end{figure}

Furthermore, the structure-preserving properties, including positivity, mass concentration, and energy dissipation, are studied for the previous three examples. Figure \ref{Energy-mass-Cmin1} illustrates the evolution of normalized discrete energy and total mass of cation. Mass conservation and energy dissipation hold in all three numerical simulations. By comparison, one can observe that when significant disparity appears in the relative dielectric coefficient, the energy decreases more rapidly during the initial stage and then dissipates at a slower rate. Ultimately, it takes longer time to reach equilibrium than in the case where the relative dielectric coefficient is constant. 
Figure \ref{Energy-mass-Cmin2} shows the minimum of concentration $c_{\min}$ which is defined by
\begin{equation}
	c_{\min}:=\min_{i,j}\{c^0_{i,j},c^1_{i,j},c^2_{i,j}\}.
\end{equation} 
$c_{\min}$ gradually decreases to a pretty small value but always remains positive.

\begin{figure}[htp]
\center
%\subfigure[]
{
\includegraphics[width=0.45\linewidth]{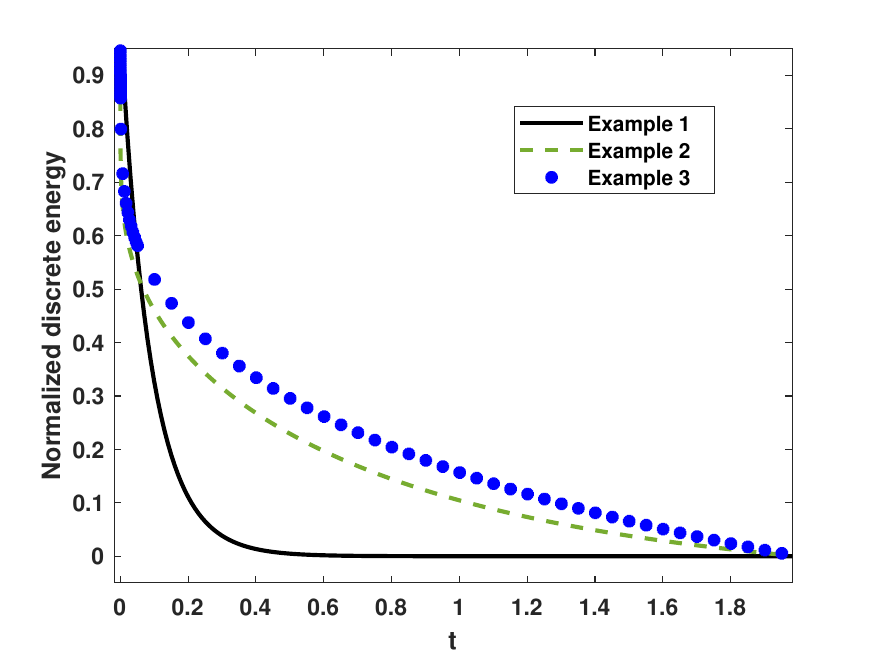}
}
%\subfigure[]
{
\includegraphics[width=0.45\linewidth]{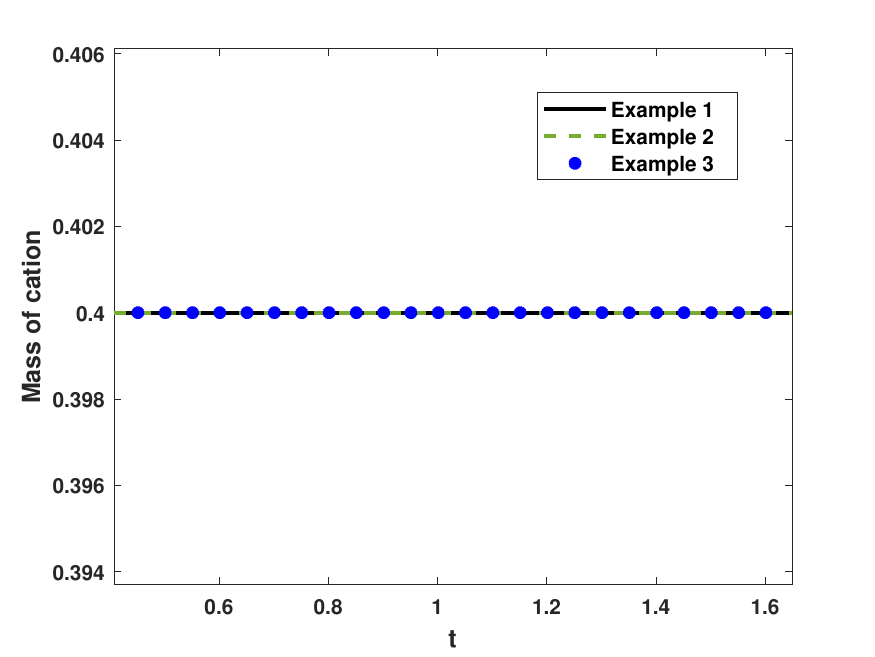}
}
\caption{ Numerical simulations for physical properties in three examples: (a) normalized discrete energies in three numerical simulations (b) total mass of cation.}
\label{Energy-mass-Cmin1}
\end{figure}

\begin{figure}[htp]
\center
\includegraphics[width=0.45\linewidth]{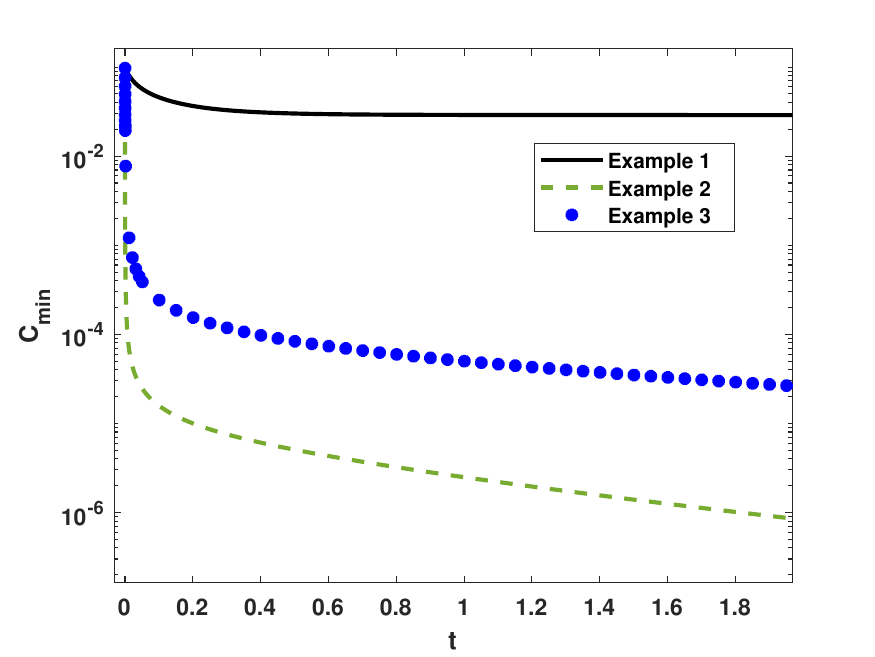}
\caption{Evolution of $c_{\min}$ in three examples.}
\label{Energy-mass-Cmin2}
\end{figure}

\section{Conclusion} \label{section conclusion}
In this work, we establish a decoupled implicit ETD method for the MANP model where steric effects and Born solvation interaction are involved to describe the complicated interactions of charged systems. An additional linear term is introduced to the Nernst-Planck equations and the reformulated equations are treated by the implicit ETD method which can be solved by the Picard iteration. Based on the skillful matrix analysis method and Slotboom transformation, a few conditions on temporal and spatial steps are proposed for the Picard iteration, positivity and energy dissipation respectively. We will try to weaken or remove these conditions in the future work. 
The flux in the Maxwell-Amp\`ere equation uses the same formulation as that of Nernst-Planck equations to fulfill Gauss's law.
To deal with the curl-free constraint, the dielectric displacement obtained from the Maxwell-Amp\`ere equation is further relaxed by a local algorithm of linear computational complexity. The relaxation provides a new idea for numerically solving Poisson's equation, especially with variable coefficient. Relevant numerical simulations demonstrate that our method can efficiently handle cases involving strong convection and maintain physical properties. The convergence analysis of the numerical method attaches to huge challenge and will be studied in our future work as well.

\begin{acknowledgements}
Z. Zhang is partially supported by the NSFC No.11871105 and No.12231003.
\end{acknowledgements}

\section*{Data Availibility}
Data will be made available on reasonable request.
\section*{Declarations}
\textbf{Conflict of interest}  The authors declare that they have no Conflict of interest concerning the publication of this manuscript.

\bibliographystyle{spmpsci}
\bibliography{ref}

% Authors must disclose all relationships or interests that 
% could have direct or potential influence or impart bias on 
% the work: 
%
% \section*{Conflict of interest}
%
% The authors declare that they have no conflict of interest.

% BibTeX users please use one of
%\bibliographystyle{spbasic}      % basic style, author-year citations
%\bibliographystyle{spmpsci}      % mathematics and physical sciences
%\bibliographystyle{spphys}       % APS-like style for physics
%\bibliography{}   % name your BibTeX data base

% Non-BibTeX users please use
%\begin{thebibliography}{}
%%
%% and use \bibitem to create references. Consult the Instructions
%% for authors for reference list style.
%%
%\bibitem{RefJ}
%% Format for Journal Reference
%Author, Article title, Journal, Volume, page numbers (year)
%% Format for books
%\bibitem{RefB}
%Author, Book title, page numbers. Publisher, place (year)
%% etc
%\end{thebibliography}

\end{document}